\newcommand{\ZZ}{\mathbb{Z}}
\newcommand{\cH}{\mathcal{H}}
\newcommand{\cM}{\mathcal{M}}
\newcommand{\cC}{\mathcal{C}}
\newcommand{\cD}{\mathcal{D}}
\newcommand{\merge}{\xrightarrow{\mathrm{merge}}}
\newcommand{\sgn}{\mathrm{sgn}}
\newcommand{\id}{\mathrm{id}}
\newcommand{\ten}{\otimes}
\newcommand{\ra}{\rightarrow}
\newcommand{\co}{\colon}
\newtheorem{theorem}{Theorem}[section]
\newtheorem*{theorem*}{Main Theorem}
\newtheorem{proposition}[theorem]{Proposition}
\newtheorem{corollary}[theorem]{Corollary}
\theoremstyle{definition}
\theoremstyle{definition}
\newtheorem{definition}[theorem]{Definition}
\newtheorem{remark}[theorem]{Remark}
\title[Sign-reversing involutions and the matroid-minor Hopf algebra]{Matroid-minor Hopf algebra: a cancellation-free antipode formula and other applications of sign-reversing involutions}
\author{Eric Bucher}
\address{Department of Mathematics,
Xavier University, Cincinnati, Ohio 45207, USA}
\email{buchere1@xavier.edu}
\author{Chris Eppolito}
\address{Department of Mathematical Sciences,
Binghamton University, Binghamton, NY 13902, USA}
\email{eppolito@math.binghamton.edu}
\author{Jaiung Jun}
\address{Department of Mathematics,
University of Iowa, Iowa City, IA, 52242, USA}
\email{jujun0915@gmail.com}
\author{Jacob P. Matherne}
\address{School of Mathematics,
Institute for Advanced Study, Princeton, NJ, 08540, USA}
\email{matherne@math.ias.edu}
\thanks{E.B. received support from AMS-Simons travel grant.  J.J. received support from AMS-Simons travel grant. Part of this work was done while J.J. was working at Binghamton university. J.M. received support from NSF Grant No. DMS-1638352 and the Association of Members of the Institute for Advanced Study (AMIAS)}
\subjclass[2010]{16T30 (Primary); 05B35  (Secondary)}
\keywords{matroids, antipodes, combinatorial Hopf algebras, hyperfields, matroid polytopes, cancellation-free}
\begin{document}

\begin{abstract}
In this paper, we give a cancellation-free antipode formula for the matroid-minor Hopf algebra. We then explore applications of this formula. For example, the cancellation-free formula expresses the antipode of uniform matroids as a sum over certain ordered set partitions.
We also prove that all matroids over any hyperfield (in the sense of Baker and Bowler) have cancellation-free antipode formulas; furthermore, the cancellations in the antipode are independent of the hyperfield structure and only depend on the underlying matroid.
\end{abstract}

\maketitle

%%%%%%%%%%%%%%%%%%%%%%%%%%%%%%%%%%%%%%%%%%%%%%%%%%%%%%%%%%%%%%%%%%%%%%%%%%%
\section{Introduction}
\label{sec:intro}
%%%%%%%%%%%%%%%%%%%%%%%%%%%%%%%%%%%%%%%%%%%%%%%%%%%%%%%%%%%%%%%%%%%%%%%%%%%

Hopf algebras naturally appear in combinatorics in the following way: one constructs a Hopf algebra whose generators are canonically parametrized by certain combinatorial objects of interest, for instance, graphs, posets, or symmetric functions. Then the Hopf algebra structures encode basic operations of combinatorial objects, such as direct sums, restrictions, or contractions. One of the main motivations for studying these types of Hopf algebras is to obtain combinatorial results by appealing to purely algebraic properties of Hopf algebras (see \cite{gr14}).  

When basic operations of combinatorial objects provide only bialgebra structures, to obtain Hopf algebra structures, one may appeal to Takeuchi's celebrated result \cite{t71}, which states that every graded, connected bialgebra
is a Hopf algebra with an explicit antipode formula. This formula, however, usually contains a large number of cancellations so is not optimal for producing combinatorial identities among the elements of the Hopf algebra.  

Recently, a great deal of research has been devoted to developing cancellation-free antipode formulas for combinatorial Hopf algebras.
For example, cancellation-free antipode formulas have been obtained for the incidence Hopf algebra on graphs by Humpert and Martin \cite{hm12}, for $K$-theoretic analogs of various symmetric function Hopf algebras (introduced by Lam and Pylyavskyy \cite{lp07}) by Patrias \cite{p16}, for the Hopf algebra of simplicial complexes by Benedetti, Hallam, and Macechek \cite{bhm16}, and for various Hopf algebras embeddable into the Hopf monoid by Benedetti and Bergeron \cite{bb16}.

In a seminal paper by Benedetti and Sagan, cancellation-free antipode formulas were given for nine combinatorial Hopf algebras \cite{bs17}.  What is particularly novel about their approach is that all of their cancellation-free formulas are obtained via the same general technique---for each of their combinatorial Hopf algebras, they introduce a sign-reversing involution on the set indexing the terms in Takeuchi's formula, and they classify the fixed points of this involution.  Summing over these fixed points yields a cancellation-free formula (see Section~\ref{sec:examplesignreverse} for an elementary example of a sign-reversing involution). Examples of combinatorial Hopf algebras that yield to their technique are the shuffle Hopf algebra, the incidence Hopf algebra for graphs, and one of the Lam--Pylyavskyy symmetric function Hopf algebras (thereby recovering the formula of Patrias). The number of recent results producing cancellation-free antipode formulas via sign-reversing involutions suggests that potentially there is a general way to define the involution for a Hopf algebra whose generators are obtained from combinatorial objects. The results presented in this paper hope to further the evidence that such an involution can be constructed. 

The matroid-minor Hopf algebra was introduced by Schmitt in 1994 \cite{s94}, and a cancellation-free formula for the antipode of uniform matroids was given by the first and last authors in 2016 \cite{bm16} by using a sign-reversing involution.  The current paper finishes what was started in \cite{bm16} and gives a cancellation-free antipode for all matroids (as well as matroids over hyperfields, in Corollary~\ref{coro:hyperfield theorem}).  

\begin{theorem*}[Theorem~\ref{thm:cancellation-free antipode}]
Let $M$ be a matroid with ground set $E$. Choose a total ordering on $E$, call it $<$. Then
\[
S(M)=\sum_{ \text{fixed points $\pi$ of } \iota_<} \ \sgn(\pi) \ \bigoplus_{i=1}^{j} U_{\delta_i^M-\sigma_i^M}^{| \delta_i^M-\sigma_i^M |} \oplus (M |_{\delta_i^M}) / (\delta_i^M - \sigma_i^M).
\]
\end{theorem*}

In the formula above, $\iota_<$ is a certain sign-reversing involution, defined from $<$, on the set of ordered set partitions of the ground set $E$ of the matroid $M$ (see Section~\ref{sec:takeuichi}), the matroid $U^{|P|}_P$ is the uniform matroid of rank $|P|$ on ground set $P$, the matroid $(M|_P)/Q$ is the matroid formed by first restricting M to $P$ then contracting $Q$ from $M|_P$, and the various $\delta$ and $\sigma$ are certain subsets of $E$ which carry information about circuits in the matroid $M$ (see Section~\ref{sec:deltasets}).

Between the time of \cite{bm16} and the current paper, a cancellation-free antipode formula for the matroid-minor Hopf algebra was obtained by Aguiar and Ardila \cite{aa17} as an application of a far-reaching approach which involves embedding the matroid-minor Hopf algebra into the Hopf algebra of generalized permutohedra.  Even though a cancellation-free formula already exists, we hope to convince the reader that further study of sign-reversing involutions in the theory of combinatorial Hopf algebras is warranted:

\begin{itemize}
\item Our formula answers part of a question by Benedetti--Sagan \cite[Section 11, Question 1]{bs17} by adding another involved example to the growing list of graded connected Hopf algebras for which cancellation-free antipodes can be obtained via sign-reversing involutions. This suggests that there might be a general procedure one could follow in the case of any combinatorial Hopf algebra.

\item Our antipode formula is given purely combinatorially.  While
  \cite{aa17} gives a much more elegant description of the antipode
  via an embedding into the Hopf algebra of generalized permutohedra,
  our antipode computations work on a ``calculus of partitions'' and
  can be automated using standard computer software.

\item In \cite{baker2017matroids}, Baker and Bowler introduced the notion of \emph{matroids over hyperfields} which unifies various generalizations of matroids including oriented matroids and valuated matroids. Consequently, in \cite{ejs17}, Hopf algebras for matroids over hyperfields are defined. Our method for the antipode formula is robust enough to obtain a cancellation-free antipode formula for the hyperfield case without much efforts. 
\end{itemize}

%---------------------------------------------------------------------------
\subsection*{Acknowledgements}
%---------------------------------------------------------------------------

The authors would like to thank Federico Ardila, Carolina Benedetti, Vic Reiner, and Bruce Sagan for useful conversations while this work was in progress. The authors also thank Binghamton University where the authors met and initiated the project. 

%%%%%%%%%%%%%%%%%%%%%%%%%%%%%%%%%%%%%%%%%%%%%%%%%%%%%%%%%%%%%%%%%%%%%%%%%%%
\section{Preliminaries}
\label{sec:prelim}
%%%%%%%%%%%%%%%%%%%%%%%%%%%%%%%%%%%%%%%%%%%%%%%%%%%%%%%%%%%%%%%%%%%%%%%%%%%%

%---------------------------------------------------------------------------
\subsection{Matroids}
%---------------------------------------------------------------------------

In this paper, we work with the ``circuit'' formulation of a matroid
(as opposed to \cite{bm16} where the first and last authors used
independent sets).

\begin{definition}
  Let $E$ be a finite set, and let $\cC$ be a collection of subsets of
  $E$ satisfying:
  \begin{itemize}
  \item[(a)] $\emptyset \not\in \cC$,
  \item[(b)] if $C_1,C_2 \in \cC$ with $C_1 \subseteq C_2$, then
    $C_1 = C_2$, and
  \item[(c)] (Circuit elimination) if $C_1,C_2 \in \cC$ with $C_1 \not= C_2$ and
    $e \in C_1 \cap C_2$, then there exists $C_3 \in \cC$ with
    $C_3 \subseteq (C_1 \cup C_2) - e$.
\end{itemize}
The pair $M = (E,\cC)$ is a {\em matroid with ground set $E$ and set
  of circuits $\cC$}.  A subset $S\subseteq E$ is \emph{dependent in $M$}
when $S$ contains some circuit of $M$.
\end{definition}

Uniform matroids form an important class of matroids: we write $U^r_n$
for the {\em uniform matroid of rank $r$ on an $n$-element set}---this
is the matroid with ground set $E$ an $n$-element set and set of
circuits $\cC$ all subsets of $E$ with exactly $r+1$ elements.
Throughout, it will be important for us to keep track of various
uniform matroids with different ground sets; when we want to specify
the ground set of a uniform matroid, we write $U_E^r$ for the uniform
matroid $U_{|E|}^r$ with specified ground set $E$.

When defining the operations inside the matroid-minor Hopf algebra we
will need the matroid operations of restriction, contraction, and
direct sum.  To this end, let $M_1 = (E_1, \cC_1)$ and
$M_2 = (E_2, \cC_2)$ be two matroids on disjoint ground sets, and let
$S$ be a subset of $E_1$.  The {\em restriction of $M_1$ to $S$} is
the matroid $M_1|_S = (S, \cD_1)$ where
$\cD_1 = \{C \subseteq S \mid C \in \cC_1\}$.  The {\em contraction of
  $S$ from $M_1$} is the matroid $M_1/S = (E_1-S, \cD)$ where $\cD$ is
the set of minimal non-empty elements of $\{C - S \mid C \in \cC_1\}$.
The {\em direct sum of matroids $M_1$ and $M_2$} is the matroid
$M_1 \oplus M_2 = (E_1 \cup E_2, \cC_1 \cup \cC_2)$.  Note that
$M_1 \oplus M_2 = M_2 \oplus M_1$.

%---------------------------------------------------------------------------
\subsection{Matroid-minor Hopf algebras}
\label{sec:mmh}
%---------------------------------------------------------------------------

For the remainder of the paper, fix a field $\Bbbk$.  A
$\Bbbk$-bialgebra is simultaneously a $\Bbbk$-algebra and a
$\Bbbk$-coalgebra together with some compatibility of the
multiplication morphism $\mu\co \cH \ten \cH \ra \cH$ and unit
morphism $\eta\co \Bbbk \ra \cH$, with the comultiplication morphism
$\Delta\co \cH \ra \cH \ten \cH$ and counit morphism
$\epsilon\co \cH \ra \Bbbk$.  A $\Bbbk$-bialgebra $\cH$ is {\em
  graded} if $\cH = \bigoplus_{\ell \in \ZZ_{\ge 0}} \cH_\ell$, and
the maps $\mu, \eta, \Delta,$ and $\epsilon$ are graded $\Bbbk$-linear
maps.  Additionally, $\cH$ is called {\em connected} if
$\cH_0 = \Bbbk$.

A Hopf algebra $\cH$ over $\Bbbk$ is a $\Bbbk$-bialgebra together with
one additional $\Bbbk$-linear map---the antipode map
$S\co \cH \ra \cH$ also satisfies some compatibility with the
$\Bbbk$-linear maps $\mu, \eta, \Delta,$ and $\epsilon$.  The
celebrated 1971 result by Takeuchi asserts that any graded, connected
$\Bbbk$-bialgebra is a Hopf algebra with an explicit antipode.  We
refer the reader to \cite{sweedler1969hopf} for an introduction to
Hopf algebras.

\begin{theorem}[\cite{t71}]\label{thm:t}
A graded, connected $\Bbbk$-bialgebra $\cH$ is a Hopf algebra, and it has a unique antipode $S$ given by
\begin{equation}\label{eq:tak}
S = \sum_{i \in \ZZ_{\ge 0}} (-1)^i \mu^{i-1} \circ \mathrm{pr}^{\otimes i} \circ \Delta^{i-1},
\end{equation}
where $\mu^{-1} := \eta$, $\Delta^{-1} := \epsilon$, and $\mathrm{pr}\co \cH \rightarrow \cH$ is the projection map defined by linearly extending the map
\[
\mathrm{pr}|_{\cH_{\ell}} = \begin{cases} 0 & \text{if }\ell = 0,\\ \id & \text{if } \ell \ge 1.\end{cases}
\]
\end{theorem}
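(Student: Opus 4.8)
The plan is to work inside the convolution algebra on $\mathrm{End}_{\Bbbk}(\cH)$ and to exhibit the stated $S$ as the convolution inverse of the identity map. Recall that for $f,g \co \cH \ra \cH$ one sets $f * g := \mu \circ (f \ten g) \circ \Delta$; the bialgebra axioms (associativity of $\mu$ and coassociativity of $\Delta$) make $\mathrm{End}_{\Bbbk}(\cH)$ an associative $\Bbbk$-algebra, and the counit axioms make $\eta \circ \epsilon$ its two-sided unit. An antipode is by definition precisely a two-sided inverse of $\id_{\cH}$ in this algebra, since the antipode axiom reads $S * \id_{\cH} = \eta\epsilon = \id_{\cH} * S$. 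Thus the entire theorem reduces to showing that $\id_{\cH}$ is invertible for the convolution product and that its inverse is given by formula~\eqref{eq:tak}.

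First I would use connectedness to write $\id_{\cH} = \eta\epsilon + \mathrm{pr}$: since $\epsilon$ vanishes on $\cH_{\ell}$ for $\ell \ge 1$ and restricts to the identification $\cH_0 = \Bbbk$, the composite $\eta\epsilon$ is exactly the projection onto $\cH_0$, whence $\mathrm{pr} = \id_{\cH} - \eta\epsilon$. Writing $e := \eta\epsilon$ for the convolution unit, the goal becomes inverting $e + \mathrm{pr}$, which formally should be the geometric series $\sum_{i \ge 0} (-1)^i \mathrm{pr}^{*i}$. A short induction using coassociativity identifies the $i$-th convolution power with $\mathrm{pr}^{*i} = \mu^{i-1} \circ \mathrm{pr}^{\ten i} \circ \Delta^{i-1}$, the conventions $\mu^{-1} = \eta$, $\Delta^{-1} = \epsilon$ yielding $\mathrm{pr}^{*0} = e$, so the candidate inverse is exactly the right-hand side of~\eqref{eq:tak}.

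The key step---and the one place where both gradedness and connectedness are essential---is to show this series is locally finite, so that $S$ is a well-defined $\Bbbk$-linear endomorphism. Fix $h \in \cH_{\ell}$. Because $\Delta$ is graded, $\Delta^{i-1}(h)$ lies in the sum of the spaces $\cH_{\ell_1} \ten \cdots \ten \cH_{\ell_i}$ with $\ell_1 + \cdots + \ell_i = \ell$. Now $\mathrm{pr}^{\ten i}$ annihilates any tensor in which some factor has degree $0$, so only terms with every $\ell_j \ge 1$ can survive; since these degrees sum to $\ell$, no such terms exist once $i > \ell$. Hence $\mathrm{pr}^{*i}(h) = 0$ for all $i > \ell$, the sum defining $S$ terminates on each graded piece, and $S$ is well defined.

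Finally I would verify the inverse property by telescoping the geometric series. One computes $(e + \mathrm{pr}) * \sum_{i=0}^{n} (-1)^i \mathrm{pr}^{*i} = e + (-1)^n \mathrm{pr}^{*(n+1)}$, and symmetrically on the other side; restricting to $\cH_{\ell}$ and taking $n \ge \ell$ kills the error term by the local-finiteness bound above, so $\id_{\cH} * S = S * \id_{\cH} = e = \eta\epsilon$. Thus $S$ is a two-sided convolution inverse of $\id_{\cH}$, i.e.\ an antipode, and $\cH$ is a Hopf algebra. Uniqueness is then automatic, since two-sided inverses in the monoid $(\mathrm{End}_{\Bbbk}(\cH), *, e)$ coincide, so the antipode is unique and equals~\eqref{eq:tak}. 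I expect the local-finiteness argument in the third paragraph to be the crux; everything else is formal manipulation in the convolution algebra.
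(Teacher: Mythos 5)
Your proof is correct. The paper itself offers no proof of this statement---it is quoted verbatim from Takeuchi's 1971 paper with a citation and a pointer to Sweedler's book---so there is nothing internal to compare against; your argument (antipode as convolution inverse of $\id_{\cH}$, the decomposition $\id_{\cH} = \eta\epsilon + \mathrm{pr}$, the geometric series $\sum_{i\ge 0}(-1)^i\mathrm{pr}^{*i}$ with local finiteness on each $\cH_\ell$ supplied by gradedness and connectedness, and uniqueness from invertibility in the convolution monoid) is precisely the standard proof of Takeuchi's formula.
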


Throughout this paper, we will focus on a particular Hopf algebra,
called the matroid-minor Hopf algebra, which we will now define.  Let
$\cM$ be any class of matroids which is closed under taking direct
sums and minors, and let $\widetilde{\cM}$ be the set
of all isomorphism classes of matroids in $\cM$.  Then
$\Bbbk \widetilde{\cM}$ is a Hopf algebra \cite{s94} with
$\Bbbk$-linear maps
\[\begin{array}{rcl}
\Bbbk \widetilde{\cM} \ten \Bbbk \widetilde{\cM} & \stackrel{\mu}{\longrightarrow} & \Bbbk \widetilde{\cM} \\
(M,N) & \longmapsto & M \oplus N, \\
& & \\
\Bbbk \widetilde{\cM} & \stackrel{\Delta}{\longrightarrow} & \Bbbk \widetilde{\cM} \ten \Bbbk \widetilde{\cM} \\
M & \longmapsto & \displaystyle \sum_{A \subseteq E} M|_A \ten M/A, \\
& & \\
\Bbbk & \stackrel{\eta}{\longrightarrow} & \Bbbk \widetilde{\cM} \\
1_{\Bbbk} & \longmapsto & U^0_0, \\
& & \\
\Bbbk \widetilde{\cM} & \stackrel{\epsilon}{\longrightarrow} & \Bbbk \\
M & \longmapsto & \left\{\begin{array}{ll}1_{\Bbbk} & \mathrm{if}\ E = \emptyset,\\ 0 & \mathrm{else}.\end{array}\right.
\end{array}\]
The matroid-minor Hopf algebra $\Bbbk\widetilde{\cM}$ is indeed a graded, connected $\Bbbk$-bialgebra, so its antipode is given by Theorem~\ref{thm:t}.
In Section~\ref{sec:rewrite}, we will rewrite Takeuchi's antipode formula \eqref{eq:tak} more explicitly for the matroid-minor Hopf algebra $\Bbbk\widetilde{\cM}$.

%---------------------------------------------------------------------------
\subsection{Sign-reversing involutions}\label{sec:signreversinginvs}
%---------------------------------------------------------------------------

Sign-reversing involutions are ubiquitous in combinatorics and are a well-known way of removing cancellations from a formula.  We give preliminaries on sign-reversing involutions, and we give an elementary example in Section~\ref{sec:examplesignreverse}.

\begin{definition}
Let $A$ be a finite set equipped with a sign function; that is, a function $\sgn\co A \ra \{\pm1\}$.  A {\em sign-reversing involution} $\iota\co A \ra A$  is an involution such that for every $a \in A$
\[
\sgn(\iota(a)) = -\sgn(a).
\]
\end{definition}

Now fix a sign-reversing involution $\iota$ of a finite set $A$.  Let $F \subset A$ be the set of fixed points of $\iota$; that is,
\[
F := \{a \in A \mid \iota(a) = a\}.
\]  
A formula of the form
\[
\sum_{a \in A} \sgn(a)
\]
may have many cancellations, but because $\iota$ is sign-reversing, it follows that
\[
\sum_{a \in A} \sgn(a) = \sum_{a \in F} \sgn(a),
\]
and the right-hand side is cancellation free.

\subsection{An example of a sign-reversing involution}\label{sec:examplesignreverse}
We would like to present a small example to illustrate how one might
use sign-reversing involutions to prove combinatorial identities. We
will use a sign-reversing involution to compute the following
summation:
\[
\sum_{k \geq 0} (-1)^k {\binom{n}{k}} .
\]
For this example we think of $\binom{n}{k}$ as enumerating the ways of partitioning $[n]$ into two sets $A$ and $B$ with $|A|=k$ and $|B|=n-k$. Then this summation assigns a $1$ or $-1$ for each 2-part partition $(A,B)$ where the sign is given by the parity of the cardinality of $A$. The sum can be rewritten then as,

\[
\sum_{(A,B)} (-1)^{|A|} .
\]
We define an involution $\iota$ on the set of 2-part partitions as
follows:

\[  
\iota (A,B) = 
\begin{cases}
     (A - \{n\},B \cup \{n\})  & \text{ if } n\in A, \\
     (A \cup \{n\},B -\{n\})  & \text{ if } n\in B. 
\end{cases}
\]
This is a sign-reversing involution on the index set of the summation. It has no fixed points, and therefore 

\[
\sum_{k \geq 0} (-1)^k \binom{n}{k} =  \sum_{(A,B)} (-1)^{|A|}  = \sum_{(A,B) \in F} (-1)^{|A|} = 0.
\]

%%%%%%%%%%%%%%%%%%%%%%%%%%%%%%%%%%%%%%%%%%%%%%%%%%%%%%%%%%%%%%%%%%%%%%%%%%%
\section{The sign-free Takeuchi term}
\label{sec:takeuichi}
%%%%%%%%%%%%%%%%%%%%%%%%%%%%%%%%%%%%%%%%%%%%%%%%%%%%%%%%%%%%%%%%%%%%%%%%%%%

We will see in Section~\ref{sec:rewrite} that Takeuchi's antipode formula \eqref{eq:tak} can be reinterpreted, for the matroid-minor Hopf algebra $\Bbbk\widetilde{\cM}$, as a sum over ordered set partitions of the ground set $E$ (see Section~\ref{sec:rewrite}).  Thus, our technique for proving Theorem~\ref{thm:cancellation-free antipode} involves introducing a sign-reversing involution (see Section~\ref{sec:inv}) on the set of ordered set partitions of the ground set $E$ of a matroid $M$.

%---------------------------------------------------------------------------
\subsection{Takeuchi's result for the matroid-minor Hopf algebra}\label{sec:rewrite}
%---------------------------------------------------------------------------

\begin{definition}
An {\em ordered set partition} $\pi =(\pi_1, \pi_2, \ldots, \pi_k)$ of a finite set $E$ is a tuple of nonempty subsets of $E$ satisfying
\begin{itemize}
\item[(a)] $\pi_i \cap \pi_j = \emptyset$ for all $1 \le i\neq j \le k$, and
\item[(b)] $\cup_{i=1}^k \pi_i = E$.
\end{itemize}
We will write $\pi \vDash E$ to denote that $\pi$ is an ordered set partition of $E$.
\end{definition}

Below we rewrite the antipode formula in \eqref{eq:tak} for the matroid-minor Hopf algebra $\Bbbk\widetilde{\cM}$ defined in Section~\ref{sec:mmh}.

\begin{proposition}\cite[Proposition 3.1]{bm16}\label{prop:matant}
Let $M \in \cM$ be a matroid with ground set $E$.  For the matroid-minor Hopf algebra $\Bbbk\widetilde{\cM}$, Takeuchi's formula \eqref{eq:tak} is equivalent to
\[
S(M) = \sum_{k \ge 0} (-1)^k \sum_{(\pi_1,\ldots,\pi_k)\vDash E} M|_{\pi_1} \oplus (M/\pi_1)|_{\pi_2} \oplus \cdots \oplus (M/\bigcup_{i=1}^{k-2}\pi_i)|_{\pi_{k-1}} \oplus M/\bigcup_{i=1}^{k-1} \pi_i.
\]
\end{proposition}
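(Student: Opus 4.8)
The plan is to evaluate the $k$-th summand $(-1)^k\,\mu^{k-1}\circ\mathrm{pr}^{\otimes k}\circ\Delta^{k-1}(M)$ of Takeuchi's formula \eqref{eq:tak} explicitly and to check that it equals the inner sum over ordered set partitions of $E$ into $k$ nonempty parts; summing on $k$ then gives the Proposition. The engine of the computation is an explicit description of the iterated coproduct $\Delta^{k-1}(M)$, which is unambiguous by coassociativity of $\Delta$.

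First I would prove by induction on $k$ that
\[
\Delta^{k-1}(M) = \sum_{\emptyset = A_0 \subseteq A_1 \subseteq \cdots \subseteq A_{k-1} \subseteq A_k = E} \ \bigotimes_{j=1}^{k} (M/A_{j-1})|_{A_j - A_{j-1}},
\]
the sum ranging over all weakly increasing chains of subsets of $E$. The base case $k=1$ is $\Delta^0 = \id$. For the inductive step, I would write $\Delta^{k-1} = (\id^{\otimes(k-2)} \otimes \Delta)\circ\Delta^{k-2}$ and apply $\Delta$ to the final tensor factor $M/A_{k-2}$, a matroid on ground set $E - A_{k-2}$; this produces the factors $(M/A_{k-2})|_B$ and $(M/A_{k-2})/B$ for $B \subseteq E - A_{k-2}$. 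Setting $A_{k-1} := A_{k-2}\cup B$ and invoking the elementary contraction identity $(M/A)/B = M/(A\cup B)$ rewrites the new last factor as $M/A_{k-1}$ and extends the chain by one step. This is the step I expect to demand the most care, since it requires correctly matching the bookkeeping of nested subsets against the tensor slots and identifying the minors produced by the coproduct.

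Next I would apply $\mathrm{pr}^{\otimes k}$. Since $\Bbbk\widetilde{\cM}$ is graded by the cardinality of the ground set, the projection $\mathrm{pr}$ annihilates a matroid factor precisely when its ground set is empty, that is, when $A_j - A_{j-1} = \emptyset$. Hence $\mathrm{pr}^{\otimes k}\circ\Delta^{k-1}(M)$ retains exactly the strict chains $\emptyset = A_0 \subsetneq A_1 \subsetneq \cdots \subsetneq A_{k-1} \subsetneq A_k = E$. Such strict chains are in bijection with ordered set partitions $(\pi_1,\ldots,\pi_k) \vDash E$ via $\pi_j = A_j - A_{j-1}$ and $A_{j-1} = \bigcup_{i=1}^{j-1}\pi_i$, and under this bijection each surviving term becomes $\bigotimes_{j=1}^{k}(M/\bigcup_{i=1}^{j-1}\pi_i)|_{\pi_j}$.

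Finally I would apply the iterated multiplication $\mu^{k-1}$, which by associativity of $\oplus$ sends a $k$-fold tensor of matroids to their direct sum, turning each surviving term into $\bigoplus_{j=1}^{k}(M/\bigcup_{i=1}^{j-1}\pi_i)|_{\pi_j}$; the last factor simplifies to $M/\bigcup_{i=1}^{k-1}\pi_i$ because $\pi_k$ is the entire ground set of that contraction. Assembling these over all $k \ge 0$ with the signs $(-1)^k$ yields the stated formula. It remains only to check the degenerate cases: for $E = \emptyset$ the single $k=0$ term $\eta\circ\epsilon(M)$ recovers $U_0^0$ (the empty direct sum), while for $E \neq \emptyset$ the $k=0$ term vanishes, consistent with there being no ordered set partition of a nonempty set into zero parts.
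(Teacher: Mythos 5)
Your proof is correct. The paper itself does not include a proof of this proposition (it defers to \cite[Proposition 3.1]{bm16}), and your argument is the standard derivation that reference relies on: an induction showing $\Delta^{k-1}(M)=\sum_{\emptyset=A_0\subseteq A_1\subseteq\cdots\subseteq A_k=E}\bigotimes_{j=1}^{k}(M/A_{j-1})|_{A_j-A_{j-1}}$ over chains of subsets, the observation that $\mathrm{pr}^{\otimes k}$ kills exactly the non-strict chains (so the survivors are ordered set partitions), and $\mu^{k-1}$ assembling the tensor factors into direct sums, with the degenerate $k=0$ case handled by $\eta\circ\epsilon$.
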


\begin{remark}
In Proposition \ref{prop:matant}, all matroids that appear in the formula are representatives chosen from their isomorphism classes.
\end{remark}

Given an ordered set partition $\pi = (\pi_1,\ldots,\pi_k) \vDash E$, we write $T(\pi)$ for the corresponding {\em signless Takeuchi term}
\[
T(\pi) := M|_{\pi_1} \oplus (M/\pi_1)|_{\pi_2} \oplus \cdots \oplus (M/\bigcup_{i=1}^{k-2}\pi_i)|_{\pi_{k-1}} \oplus M/\bigcup_{i=1}^{k-1} \pi_i.
\] 
In the next section, we will provide a characterization of the signless term $T(\pi)$.  To do so, we first develop the language of $\delta$-sets.

%---------------------------------------------------------------------------
\subsection{Defining $\delta$-sets}\label{sec:deltasets}
%---------------------------------------------------------------------------

Let $M$ be a matroid with ground set $E$, and let $\pi = (\pi_1, \pi_2, \ldots, \pi_k) \vDash E$.  We define the following for the pair $(M,\pi)$:

\begin{itemize}
\item The {\em $\ell$-th part of $(M,\pi)$} is defined as
\begin{equation} \label{equation: l-part}
\ell(M,\pi) := \min\left\{t\ |\ \bigcup_{1 \le i \le t} \pi_i \text{ is a dependent set}\right\}.
\end{equation}
 \item The {\em $\delta$-function of $(M,\pi)$} is defined as
\[
\delta(M,\pi) := \bigcup_{1 \le i \leq \ell(M,\pi)} \pi_i.
\]
\item The {\em $\sigma$-function of $(M,\pi)$} is defined as
\[
\sigma(M,\pi) := \pi_{\ell(M,\pi)}.
\]
\end{itemize}

Now we are ready to define the $\delta$-sets and $\sigma$-sets for
$(M,\pi)$; intuitively, $\delta$-sets keep track of where new circuits
are introduced in the union of the initial parts of the partition
$\pi$.  Note that when $\pi=(\pi_1,\pi_2,\ldots,\pi_k)$ is an ordered
partition of set $E$, we write $\pi/\bigcup_{i=1}^m\pi_i$ for the
ordered partition $(\pi_{m+1},\ldots,\pi_k)$ of
$E-\left(\bigcup_{i=1}^m\pi_i\right)$.  Formally:

\begin{definition}
  We define the $\delta$-sets iteratively as follows:
\begin{align*}
  \delta_1^M&:=\delta(M,\pi),\\
  \delta_{j+1}^M
            &:=\delta(M / U_j,\pi/U_j)
              \text{ when }\delta_j(M,\pi)\neq\delta(M / U_j,\pi/U_j),
\end{align*}
where $U_j=\bigcup_{i=1}^j \delta_i^M$ for all appropriate $j$.  We
call $(\delta_1^M, \delta_2^M, \dots , \delta_j^M)$ the {\em
  $\delta$-sets of $(M,\pi)$}.
\end{definition}

We also define the $\sigma$-sets for $(M,\pi)$, which are determined by the $\delta$-sets of $(M,\pi)$.

\begin{definition}
We define the $\sigma$-sets iteratively as follows:
\begin{align*}
  \sigma_1^M&:=\sigma(M,\pi),\\
  \sigma_{j+1}^M
            &:=\sigma(M / U_j,\pi/U_j)
              \text{ when }\sigma_j(M,\pi)\neq\sigma(M / U_j,\pi/U_j),
\end{align*}
where $U_j=\bigcup_{i=1}^j \delta_i^M$ for all appropriate $j$.  We
call $(\sigma_1^M, \sigma_2^M, \dots , \sigma_j^M)$ the {\em
  $\sigma$-sets of $(M,\pi)$}.
\end{definition}

\begin{proposition}
\label{prop:signless tak}
The signless Takeuchi term $T(\pi)$ is completely determined by the $\delta$-sets of $(M,\pi)$.  Specifically,
\[
T(\pi) = \bigoplus_{i=1}^{j} U_{\delta_i^M-\sigma_i^M}^{| \delta_i^M-\sigma_i^M |} \oplus (M |_{\delta_i^M}) / (\delta_i^M - \sigma_i^M).
\]
\end{proposition}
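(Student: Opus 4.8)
The plan is to prove the identity by induction on the number of parts of $\pi$ (equivalently, on $|E|$), peeling off the block of summands of $T(\pi)$ up to and including the index $\ell := \ell(M,\pi)$ at which the first dependent prefix occurs. Two elementary matroid facts will drive the argument. First, a \emph{freeness lemma}: if $A \sqcup B$ is independent in $M$, then $(M/A)|_B$ is the free (full-rank uniform) matroid $U_B^{|B|}$, since the rank of $B$ in $M/A$ equals $r_M(A\cup B)-r_M(A)=|B|$. Second, the standard \emph{minor-commutation identity}: for disjoint $A,B \subseteq E$ one has $(M/A)|_B = (M|_{A\cup B})/A$. I would isolate both of these as preliminary observations before the main induction.

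For the inductive step I would first treat the summands of $T(\pi)$ indexed by $m<\ell$. As $m\le \ell-1$, the prefix $\bigcup_{i\le m}\pi_i$ is independent, so the freeness lemma shows each summand $(M/\bigcup_{i<m}\pi_i)|_{\pi_m}$ is free on $\pi_m$; taking the direct sum over $m=1,\dots,\ell-1$ gives the free matroid on $\bigcup_{i<\ell}\pi_i = \delta_1^M - \sigma_1^M$, i.e.\ exactly the factor $U^{|\delta_1^M-\sigma_1^M|}_{\delta_1^M-\sigma_1^M}$. Next, the single summand at $m=\ell$ is $(M/(\delta_1^M-\sigma_1^M))|_{\sigma_1^M}$, which the minor-commutation identity (with $A=\delta_1^M-\sigma_1^M$, $B=\sigma_1^M$, $A\cup B=\delta_1^M$) rewrites as $(M|_{\delta_1^M})/(\delta_1^M-\sigma_1^M)$, precisely the $i=1$ summand of the claimed formula. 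Finally I would observe that the remaining summands, those with $m>\ell$, constitute exactly the signless Takeuchi term $T'(\pi')$ of the pair $(M',\pi') := (M/U_1, \pi/U_1)$, since $M/\bigcup_{i<m}\pi_i = M'/\bigcup(\text{parts of }\pi'\text{ before }\pi_m)$; and that, directly from the iterative definitions, the $\delta$- and $\sigma$-sets of $(M',\pi')$ are the shifts $(\delta_2^M,\dots,\delta_j^M)$ and $(\sigma_2^M,\dots,\sigma_j^M)$. Applying the induction hypothesis to $(M',\pi')$ and reassembling then yields the full formula.

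I expect the main obstacle to be the careful bookkeeping of the \emph{accumulated} contractions across blocks. Concretely, the induction hypothesis produces, for $i\ge 2$, summands of the shape $(M'|_{\delta_i^M})/(\delta_i^M-\sigma_i^M)$ with $M'=M/U_1$ (and, after unwinding all levels, with $M$ replaced by $M/U_{i-1}$); so the symbol $M|_{\delta_i^M}$ in the statement must be interpreted as restriction taken in the matroid $M/U_{i-1}$ present at the start of the $i$-th block, equivalently $(M|_{U_i})/(U_i-\sigma_i^M)$ via the same identity applied with $A=U_{i-1}$. Verifying that these forms agree, and that the ground sets $\delta_i^M-\sigma_i^M$ and $\sigma_i^M$ of the successive summands are pairwise disjoint and exhaust $E$ so that the external direct sum is well formed, is the delicate part; I would also need to address the terminal block, in which $M/U_j$ has only independent prefixes and contributes a free matroid on $E-U_j$, to confirm it is correctly accounted for. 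Everything else reduces to the two preliminary lemmas and the shift of the $\delta/\sigma$-data under contraction.
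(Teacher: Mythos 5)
Your proposal is correct and takes essentially the same route as the paper's proof: peel off the first block, use independence of $\delta_1^M-\sigma_1^M$ to produce the free factor and minor commutation to produce $(M|_{\delta_1^M})/(\delta_1^M-\sigma_1^M)$, then recurse on $M/\delta_1^M$ with the remaining parts (the paper phrases your induction as iterating ``on $M/\delta_1^M$ as if it were its own matroid''). Your explicit freeness and commutation lemmas, and your observation that $M|_{\delta_i^M}$ for $i\ge 2$ must be read as restriction inside $M/U_{i-1}$ (equivalently $(M|_{U_i})/(U_i-\sigma_i^M)$), simply make rigorous the bookkeeping the paper leaves implicit.
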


\begin{proof}

Computing $T(\pi)$ is independent of the order in which we apply the restrictions and contractions dictated by the partition $\pi$.  We will let the $\delta$-sets prescribe to us the order. The first restriction/contraction we perform is

\[
M|_{\delta_1^M} \oplus M/{\delta_1^M}.
\]

Consider the restriction/contraction that is applied to $M$ dictated by the part of $\pi$ immediately proceeding $\sigma_1^M$.  After applying this operation, we obtain

\begin{align*}
&(M|_{\delta_1^M})|_{\delta_1^M - \sigma_1^M} \oplus (M|_{\delta_1^M})/(\delta_1^M - \sigma_1^M) \\
&= M|_{\delta_1^M - \sigma_1^M} \oplus (M|_{\delta_1^M})/(\delta_1^M - \sigma_1^M)
\end{align*}

The first summand contains no dependent sets by the definition of $\delta_1^M$. Therefore $M|_{\delta_1^M - \sigma_1^M}  = U_{\delta_1^M - \sigma_1^M} ^{|\delta_1^M - \sigma_1^M|}$. Furthermore any additional restrictions/contractions dictated by $\pi$ which affect this summand will also result in the matroid $U_{\delta_1^M - \sigma_1^M} ^{|\delta_1^M - \sigma_1^M|}$. Therefore after applying all of the prescribed restriction/contraction operations up to $\delta_1^M$, we have the term

\[
U_{\delta_1^M - \sigma_1^M} ^{|\delta_1^M - \sigma_1^M|} \oplus (M|_{\delta_1^M})/(\delta_1^M - \sigma_1^M) \oplus M/\delta_1^M.
\]

Now we iterate the process on $M/\delta_1^M$ as if it were its own matroid that did not arise from any operations. The result is the sum

\[
T(\pi) = \bigoplus_{i=1}^{j} U_{\delta_i^M-\sigma_i^M}^{| \delta_i^M-\sigma_i^M |} \oplus (M |_{\delta_i^M}) / (\delta_i^M - \sigma_i^M).
\]
\end{proof}

%%%%%%%%%%%%%%%%%%%%%%%%%%%%%%%%%%%%%%%%%%%%%%%%%%%%%%%%%%%%%%%%%%%%%%%%%%%
\section{The involution $\iota_<$}
\label{sec:inv}
%%%%%%%%%%%%%%%%%%%%%%%%%%%%%%%%%%%%%%%%%%%%%%%%%%%%%%%%%%%%%%%%%%%%%%%%%%%

In this section, we construct a sign-reversing involution $\iota_<$
with the property $T(\iota_< (\pi))=T(\pi)$.  Let $M$ be a matroid
with ground set $E$, and fix a total ordering $<$ of $E$.  We will
define $\iota_<$ as a ``split-merge process'' on the ordered set
partitions of $E$.  For clarity, we first give a heuristic definition in the next paragraph, and then give the formal definition immediately afterward.

For simplicity, we write $\iota$ for $\iota_<$.\footnote{In general, the involution $\iota_<$ is dependent on the choice of total ordering $<$, and in fact there is a distinct involution for each choice of $<$.  Regardless of which total ordering we use for defining $\iota_<$, the cancellation-free antipode formula obtained in Theorem~\ref{thm:cancellation-free antipode} is the same.} The way $\iota$ is
  applied is that it starts with the first part of the partition and
  attempts to split the part. If it can, it splits the part into two
  new parts and then $\iota$ stops. If it cannot, it will attempt to
  merge the part with its successor, combining them into one part and
  then $\iota$ stops. If it neither split nor merged the part, then the
  process moves on to attempt to split or merge the next part. It will
  do this until it either is applied to a part or passes through the
  entire partition and is never applied.

Now we will give the formal definition of splitting and merging.
\begin{description}
\item[Splitting] If the part $\pi_i$ has more than one element, then $\iota$ will try to split this part.  
\begin{itemize}
\item Let $x$ be the largest element in $\pi_i$ with respect to $<$. If $\pi_i$ splits, replace the part $\pi_i$ with the two parts
\[
\{x\} \ \ \ \ \ \ \ \text{and}\ \ \ \ \ \ \ \pi_i - \{x\}.
\]
Hence the new partition after splitting is \[\iota(\pi) =( \pi_1,\pi_2, \dots , \{x\} , \pi_i -\{x\} , \pi_{i+1}, \dots, \pi_k).\]
\item The map $\iota$ will \textbf{only} split if $\pi_i$ and $\iota(\pi)$ have the same $\delta$-sets and $\sigma$-sets as $\pi$. 

\end{itemize}
\vspace{.3cm}

\item[Merging] If the part $\pi_i$ is a single element, then $\iota$ will try to merge it with the part immediately following it. Let $\pi_i={x}$.

\begin{itemize}
\item Merging two parts results in a new ordered set partition that replaces $\pi_i$ and $\pi_{i+1}$ with the single part $\pi_i \cup \pi_{i+1}$. Hence the new partition after merging is \[\iota(\pi)= (\pi_1,\pi_2,\dots, \pi_{i-1},\pi_i\cup \pi_{i+1},\pi_{i+2},\dots, \pi_k).\]

\item The map $\iota$ will \textbf{only} apply a merge if $x > y $ for all $y \in \pi_{i+1}$, and
\item the $\delta$-sets and $\sigma$-sets of $\iota(\pi)$ are the same as the $\delta$-sets and $\sigma$-sets of $\pi$.
\end{itemize}
\end{description}

We will now show that $\iota$ is a sign-reversing
involution.  In order to prove this, we will use the following
observation multiple times.

\begin{remark}
\label{rem:delta}
The most important aspect of $\iota$ is that it preserves the $\delta$-sets and $\sigma$-sets of the pair $(M,\pi)$.  This property of $\iota$ will play a key role in the ``pairing off'' of ordered set partitions needed to produce a cancellation-free antipode formula.
\end{remark}

This remark along with Proposition \ref{prop:signless tak} gives the following result.

\begin{theorem}
\label{thm:involution}
Let $M$ be a matroid with ground set $E$, and let $\pi \vDash E$. Then $T(\pi)=T(\iota(\pi))$.
\end{theorem}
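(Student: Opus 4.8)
The plan is to exploit the fact that the definition of $\iota$ was engineered precisely so that it preserves the $\delta$-sets and $\sigma$-sets of $(M,\pi)$, and then to invoke Proposition~\ref{prop:signless tak}, which expresses $T(\pi)$ solely in terms of that data. So the argument is really a matter of unwinding the definition and citing one earlier result.

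First I would record the three mutually exclusive outcomes of the split-merge process that defines $\iota$. Exactly one of the following occurs: (i) some part $\pi_i$ with more than one element is split into $\{x\}$ and $\pi_i-\{x\}$, where $x$ is the $<$-largest element of $\pi_i$; (ii) some singleton part $\pi_i=\{x\}$ is merged with its successor $\pi_{i+1}$; or (iii) the process passes through the whole partition without ever splitting or merging, in which case $\iota(\pi)=\pi$ is a fixed point. In cases (i) and (ii) the defining clauses of the splitting and merging rules explicitly forbid the operation unless $\iota(\pi)$ has the \emph{same} $\delta$-sets and $\sigma$-sets as $\pi$; the map refuses to act otherwise. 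In case (iii) equality of the $\delta$- and $\sigma$-sets is automatic since $\iota(\pi)=\pi$. Thus in every case $(M,\iota(\pi))$ and $(M,\pi)$ share the same $\delta$-sets $(\delta_1^M,\dots,\delta_j^M)$ and $\sigma$-sets $(\sigma_1^M,\dots,\sigma_j^M)$.

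Having established this, I would conclude by a single appeal to Proposition~\ref{prop:signless tak}, which gives the closed form
\[
T(\pi) = \bigoplus_{i=1}^{j} U_{\delta_i^M-\sigma_i^M}^{|\delta_i^M-\sigma_i^M|} \oplus (M|_{\delta_i^M})/(\delta_i^M - \sigma_i^M).
\]
Because the right-hand side depends on $(M,\pi)$ only through its $\delta$-sets and $\sigma$-sets, and these coincide for $\pi$ and $\iota(\pi)$ by the previous paragraph, the two signless Takeuchi terms are equal, i.e.\ $T(\pi)=T(\iota(\pi))$. This is exactly the mechanism flagged in Remark~\ref{rem:delta}.

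I do not expect a genuine obstacle for this particular statement: the substantive work has already been front-loaded into the careful definition of $\iota$, where preservation of the $\delta$- and $\sigma$-sets is imposed as a side condition on each split and each merge, and into Proposition~\ref{prop:signless tak}. The only points demanding care are bookkeeping ones. I would want to confirm that the split-merge procedure terminates in exactly one of the three outcomes above, so that the case analysis is genuinely exhaustive and $\iota(\pi)$ is unambiguously defined, and I would want to read the phrase ``the same $\delta$-sets and $\sigma$-sets'' in the splitting clause as a \emph{constraint} on when $\iota$ is permitted to act, not as an assertion to be verified. The deeper claims that $\iota$ is an involution and is sign-reversing are logically independent of $T(\pi)=T(\iota(\pi))$ and would be treated separately.
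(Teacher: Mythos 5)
Your proposal is correct and follows exactly the paper's argument: the paper derives this theorem in one line from Remark~\ref{rem:delta} (that $\iota$ preserves $\delta$- and $\sigma$-sets by construction) together with Proposition~\ref{prop:signless tak} (that $T(\pi)$ is determined by that data). Your version simply spells out the split/merge/fixed-point case analysis that the paper leaves implicit.
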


The goal was to construct a sign-reversing \emph{involution}. We need to show that $\iota$ in fact accomplishes this.

\begin{theorem}
The map $\iota$ is an involution on the set of ordered set partitions of $E$.
\end{theorem}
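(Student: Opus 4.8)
The plan is to prove directly that $\iota(\iota(\pi)) = \pi$ for every ordered set partition $\pi \vDash E$, organized around the first index at which $\iota$ acts. If $\iota$ is never applied, then $\pi$ is a fixed point, $\iota(\pi) = \pi$, and there is nothing to check; so assume $\iota$ acts and let $i$ be the first position at which a split or merge occurs. The two operations are designed to be mutual inverses at the element level: a split replaces a part $P$ by $(\{x\}, P - \{x\})$ with $x = \max_< P$, while a merge replaces a consecutive pair $(\{x\}, Q)$ satisfying $x > \max_< Q$ by $\{x\} \cup Q$. First I would record the elementary fact that these undo one another: if $\iota$ splits at $i$, then part $i$ of $\iota(\pi)$ is the singleton $\{x\}$ followed by $\pi_i - \{x\}$, and a merge there returns $\pi$; symmetrically, if $\iota$ merges at $i$, then part $i$ of $\iota(\pi)$ is $\{x\} \cup \pi_{i+1}$, whose largest element is exactly $x$ (since $x > \max_< \pi_{i+1}$), so splitting off $x$ returns $\pi$.

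The feasibility of this inverse operation is immediate. The ordering condition holds because $x$ is by construction the largest element of the relevant part, and the $\delta$- and $\sigma$-set condition holds because $\iota$ acts only when it preserves the $\delta$- and $\sigma$-sets (Remark~\ref{rem:delta}); thus $\iota(\pi)$ and $\pi$ share the same $\delta$- and $\sigma$-sets, and the inverse operation produces $\pi$ exactly, so it preserves those sets as well.

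The substantive content is to show that, applied to $\iota(\pi)$, the scan \emph{passes through} each of the first $i-1$ parts and therefore reaches position $i$ to perform the inverse. Since the action at $i$ leaves parts $1, \dots, i-1$ untouched, these agree in $\pi$ and $\iota(\pi)$, and I must check that the pass-through decision for each is unchanged. For a multi-element part this is a split decision, and for a singleton a merge decision. The ordering half of the merge decision at position $i-1$ is the one requiring care, since the successor of $\pi_{i-1}$ changes from $\pi_i$ to either $\{x\}$ or $\{x\} \cup \pi_{i+1}$; the key observation is that in all cases the largest element of that successor is still $x$, so the comparison $z < \max_<(\text{successor})$ is literally unchanged and an ordering-infeasible merge stays infeasible.

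The hard part will be the $\delta$- and $\sigma$-set half of these pass-through decisions, and especially the merge decision at position $i-1$, whose tested pair of parts overlaps the region $\iota$ modified, so one cannot merely invoke commutation with a disjoint operation. The delicate scenario is $\pi_{i-1} = \{z\}$ with $z > x$ (ordering for a merge at $i-1$ satisfied) that was passed over in $\pi$ only because merging it with $\pi_i$ altered the $\delta$- and $\sigma$-sets: I must rule out that the corresponding merge in $\iota(\pi)$ becomes feasible. I expect this to need a lemma expressing $\delta$- and $\sigma$-set feasibility of an operation as a condition local to the block of consecutive parts it affects, together with a comparison of how splitting $\pi_i$ and merging $\{z\} \cup \pi_i$ each perturb that block. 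For all other $j < i$ the tested operation genuinely commutes with the action at $i$, and combined with the global preservation of $\delta$- and $\sigma$-sets by $\iota$ this leaves the pass-through decision unchanged. Granting the block-locality lemma, every $j < i$ is passed through in $\iota(\pi)$ exactly as in $\pi$, the inverse operation fires at position $i$, and $\iota(\iota(\pi)) = \pi$.
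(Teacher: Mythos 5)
You follow the same route as the paper: locate the first position $i$ at which $\iota$ acts, check that split and merge are mutual inverses there, and argue that the left-to-right scan of $\iota(\pi)$ passes through positions $1,\dots,i-1$ so that the inverse move fires at $i$. The difference is one of rigor rather than strategy, and it cuts in your favor: the paper dismisses the pass-through step in a single sentence (the earlier parts ``were not altered''), whereas you correctly isolate it as the real content, since the split/merge tests at a position $j<i$ consult data (the $\delta$- and $\sigma$-sets, and at $j=i-1$ the successor part) that are not obviously undisturbed by the action at $i$. Your observation that the successor of $\pi_{i-1}$ has the same maximum $x$ in both $\pi$ and $\iota(\pi)$ settles the ordering half of that test, a point the paper never addresses at all. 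The one thing you leave unproved, the ``block-locality lemma,'' is precisely the step the paper also omits, and it is true: a split (resp.\ merge) inserts (resp.\ deletes) exactly one set in the chain of prefix unions $\pi_1\subseteq\pi_1\cup\pi_2\subseteq\cdots$, and since the $\delta$- and $\sigma$-data are read off from the minimal dependent sets of this chain together with their immediate predecessors, a direct check shows that a split preserves the data if and only if the split part is not a $\sigma$-part, and a merge preserves the data if and only if neither merged part is a $\sigma$-part. Because parts $1,\dots,i-1$ and the $\delta$-/$\sigma$-sets of $\pi$ and $\iota(\pi)$ coincide (and position $i$ is a $\sigma$-position in neither), every pass-through decision is unchanged and your argument closes. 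Once that lemma is written out, your proposal is in fact a more complete proof of the theorem than the one given in the paper.
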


\begin{proof}
Let $\pi \vDash E$. There are two cases to check:  if $\iota(\pi)$ results in a split, and if $\iota(\pi)$ results in a merge. If $\pi$ is neither split nor merged by $\iota$ then it is a fixed point of $\iota$, and hence we have $\iota^2(\pi)=\pi$.

\vspace{.3cm}

\underline{Case One}: $\iota$ splits $\pi$.

\vspace{.3cm}

We have that $\iota^2(\pi)= \iota(( \pi_1,\pi_2, \dots , \{x\} , \pi_i -\{x\} , \pi_{i+1}, \dots, \pi_k))$. Now let us consider what $\iota$ will do for this new partition. It will not attempt to split or merge $\pi_1,\pi_2, \dots, \pi_{i-1}$ because they were not altered during $\iota(\pi)$ and $\iota$ attempts to split or merge starting from left to right. The first part of $( \pi_1,\pi_2, \dots , \{x\} , \pi_i -\{x\} , \pi_{i+1}, \dots, \pi_k)$ which $\iota$ will attempt to be applied to is the part $\{x\}$. Since $x$ was split from $\pi_i$, we know that $x>y$ for all $y\in\pi_{i}$. This is exactly the property that is required for $\iota$ to merge $\{x\}$ with the part immediately following it. Hence we get

\[\iota^2(\pi)= \iota(( \pi_1,\pi_2, \dots , \{x\} , \pi_i -\{x\} , \pi_{i+1}, \dots, \pi_k))=\pi.\]

\vspace{.3cm}

\underline{Case Two}: $\iota$ merges $\pi$.

\vspace{.3cm}

We have that $\iota^2(\pi)= \iota((\pi_1,\pi_2,\dots, \pi_{i-1},\pi_i\cup \pi_{i+1},\pi_{i+2},\dots, \pi_k))$. By Remark \ref{rem:delta}, we know that both splitting and merging will never affect the $\delta$-sets and $\sigma$-sets. We can also see that $\iota$ will not apply to the parts $\pi_1, \pi_2, \dots$, and $\pi_{i-1}$ or it would have been altered during the initial application of $\iota$. Therefore the first time $\iota$ will attempt to be applied is to the part $\pi_i \cup \pi_{i+1}$. This part will split. Since $\iota(\pi)$ resulted in a merge, we know that $\pi_i=\{x\}$ and $x>y$ for all $y\in \pi_{i+1}$. Therefore

\begin{align*}
\iota^2(\pi)&= \iota((\pi_1,\pi_2,\dots, \pi_{i-1},\pi_i\cup \pi_{i+1},\pi_{i+2},\dots, \pi_k))\\
&=(\pi_1,\pi_2,\dots, \pi_{i-1},\pi_i, \pi_{i+1},\pi_{i+2},\dots, \pi_k)\\
&=\pi.
\end{align*}
In both cases we have that $\iota^2(\pi)=\pi$, as desired.
\end{proof}

Until this point, we considered only the {\em signless Takeuchi term} $T(\pi)$.  Now we take into account signs.

\begin{definition}
Let $\pi=(\pi_1,\pi_2,\dots, \pi_k) \vDash E$, where $E$ is the ground set of a matroid $M$.  Define \[ \sgn(\pi) := (-1)^k.\]  
\end{definition}
This definition exactly produces the sign associated to the signless term $T(\pi)$ in the antipode formula of Proposition~\ref{prop:matant}.

\begin{theorem}
\label{thm:signreversing}
The involution $\iota$ is sign-reversing in that; if $\pi$ is not a fixed point of $\iota$, then $\sgn(\pi)=-\sgn(\iota(\pi))$. 
\end{theorem}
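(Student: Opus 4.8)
The plan is to exploit the fact that the sign $\sgn(\pi) = (-1)^k$ depends \emph{only} on the number of parts $k$ of the ordered set partition $\pi$, and not on any finer data such as the $\delta$-sets or the matroid structure. Since a non-fixed point $\pi$ is, by definition of $\iota$, one that gets either split or merged, it suffices to check that each of these two operations flips the parity of the number of parts.

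First I would dispose of the splitting case. Suppose $\iota$ splits the part $\pi_i$, writing $\pi = (\pi_1, \dots, \pi_k)$. By the formal definition of splitting in Section~\ref{sec:inv}, the result is
\[
\iota(\pi) = (\pi_1, \dots, \{x\}, \pi_i - \{x\}, \pi_{i+1}, \dots, \pi_k),
\]
which has exactly $k+1$ parts, since one part has been replaced by two nonempty parts. Hence $\sgn(\iota(\pi)) = (-1)^{k+1} = -(-1)^k = -\sgn(\pi)$.

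Next I would handle the merging case symmetrically. If $\iota$ merges $\pi_i$ with $\pi_{i+1}$, then
\[
\iota(\pi) = (\pi_1, \dots, \pi_{i-1}, \pi_i \cup \pi_{i+1}, \pi_{i+2}, \dots, \pi_k),
\]
which has exactly $k-1$ parts, as two parts have been replaced by one. Therefore $\sgn(\iota(\pi)) = (-1)^{k-1} = -(-1)^k = -\sgn(\pi)$. Combining the two cases establishes the claim for every non-fixed point.

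I do not anticipate a genuine obstacle here: once one notes that $\sgn$ is purely a function of the number of parts, the sign-reversing property is an immediate parity count, with splitting adding one part and merging removing one. The substantive content of the construction lies elsewhere---namely in Theorem~\ref{thm:involution}, which guarantees $T(\iota(\pi)) = T(\pi)$, and in the verification that $\iota$ is an involution. Those results ensure that the paired terms $\pi$ and $\iota(\pi)$ carry identical signless Takeuchi terms; the present theorem merely confirms that their signs are opposite, so that each such pair cancels in Proposition~\ref{prop:matant} and only the fixed points survive.
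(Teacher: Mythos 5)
Your proof is correct and matches the paper's argument exactly: both observe that $\sgn(\pi)=(-1)^k$ depends only on the number of parts, and that a split adds one part while a merge removes one, so the parity flips. Your write-up is simply a more detailed version of the paper's (very terse) proof.
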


\begin{proof}
Let $\pi$ be a non-fixed point of $\iota$. Then $\iota(\pi)$ either is split or merged by $\iota$. Therefore $\iota(\pi)$ has exactly one more or one fewer part than $\pi$.
\end{proof}

\subsection{Characterizing the fixed points of $\iota$}

Now we will characterize the fixed points of $\iota$.

\begin{theorem}
Let $M$ be a matroid with ground set $E$. The fixed points of $\iota$ are the ordered set partitions $\pi$ of $E$ which satisfy the following:

\begin{itemize}
\item For each $i$ which is not the occurrence of a $\sigma$-set, we have $\pi_i$ consists of a single element, and
\item between each occurrence of a $\sigma$-set, these single element parts are in ascending order. 
\end{itemize}
\end{theorem}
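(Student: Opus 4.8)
The strategy is to show that an ordered set partition $\pi=(\pi_1,\ldots,\pi_k)$ is fixed by $\iota$ precisely when $\iota$ can neither split nor merge at any part, and then to translate the failure of ``split'' and ``merge'' into the two bulleted conditions. Recall that $\iota$ scans the parts from left to right and acts at the first part where an action is \emph{admissible}; thus $\pi$ is a fixed point if and only if at every index $i$ both the split and the merge are \textbf{inadmissible}. Since admissibility is governed entirely by the constraint that the $\delta$-sets and $\sigma$-sets be preserved (together with the ordering condition on the extremal element), the proof reduces to analyzing, part by part, when splitting or merging destroys a $\delta$-set or $\sigma$-set.

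\begin{proof}[Proof sketch]
Since $\iota$ acts at the leftmost admissible part, the partition $\pi$ is a fixed point if and only if \emph{no} part admits either a split or a merge. I would first argue the forward direction. Fix an index $i$.

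First I would handle the parts $\pi_i$ that are \emph{not} the occurrence of a $\sigma$-set. Suppose such a $\pi_i$ had two or more elements, and let $x$ be its $<$-largest element. The key point, which I would isolate as the main lemma, is that splitting off $\{x\}$ from such a part cannot change the $\ell$-value at any stage: because $\pi_i$ does not contribute a new circuit (it is not where a $\sigma$-set occurs), the unions $\bigcup_{j\le t}\pi_j$ that first become dependent are unaffected by reordering $\pi_i$ internally, so the $\delta$-sets and $\sigma$-sets of the split partition agree with those of $\pi$. Hence the split would be admissible, contradicting fixedness. This forces every non-$\sigma$ part to be a singleton, giving the first bullet.

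Next I would establish the ordering condition. Consider two consecutive singleton parts $\pi_i=\{x\}$ and $\pi_{i+1}=\{y\}$ lying strictly between two $\sigma$-occurrences, and suppose for contradiction that $x>y$. Then the merge criterion ($x>y'$ for all $y'\in\pi_{i+1}$) is satisfied, and—by the same invariance argument, since neither singleton introduces a new circuit—merging $\{x\}$ with $\{y\}$ preserves all $\delta$-sets and $\sigma$-sets. Thus the merge would be admissible, again contradicting fixedness; so we must have $x<y$, i.e.\ ascending order, which is the second bullet.

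For the converse I would show that a $\pi$ satisfying both bullets admits no action. At a singleton non-$\sigma$ part no split is possible (one element), and a merge with its successor is blocked either because the successor is a larger singleton (violating $x>y$ by the ascending condition) or because merging across a $\sigma$-occurrence would alter a $\delta$-set or $\sigma$-set. At a $\sigma$-part any split or merge necessarily moves the element that creates the relevant new circuit, changing $\ell$ and hence the $\delta$/$\sigma$ data, so both actions are inadmissible. Therefore $\pi$ is fixed.
\end{proof}

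\noindent\textbf{Main obstacle.}
The crux is the invariance lemma: proving rigorously that an internal split of a non-$\sigma$ part, or a merge of two non-$\sigma$ singletons, leaves \emph{every} $\delta_j^M$ and $\sigma_j^M$ unchanged—not just $\delta_1^M$. Because the $\delta$-sets are defined iteratively on successive contractions $M/U_j$, one must check that altering the partition in a region that does not carry a $\sigma$-occurrence commutes with the whole iterative construction. I expect this to require careful bookkeeping of how the $\ell$-value in equation~\eqref{equation: l-part} depends only on \emph{which} prefix unions are dependent, and not on the internal order of a part that fails to complete a circuit; the rest of the argument is then a direct case check against the split and merge admissibility rules.
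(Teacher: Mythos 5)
Your proposal is correct and follows essentially the same route as the paper's proof: both directions are handled by the same case analysis (splits are blocked at $\sigma$-parts and at singletons, merges are blocked at $\sigma$-parts by $\sigma$-set preservation and at consecutive non-$\sigma$ singletons by the ascending-order test), and the forward direction likewise rests on the fact that any non-$\sigma$ part of size at least two admits a split and any descending adjacent pair of non-$\sigma$ singletons admits a merge. The only difference is one of emphasis: the invariance lemma you flag as the main obstacle (that splitting a non-$\sigma$ part or merging non-$\sigma$ singletons preserves all $\delta$- and $\sigma$-sets, since the new or removed prefix union is contained in an independent prefix union of its stage) is asserted without comment in the paper, and your sketch of it is the correct justification.
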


\begin{proof}
First we will show that each partition satisfying these two conditions will be a fixed point. Note that splitting will never occur at a $\sigma$-set because this would alter the overall $\sigma$-sets of the partition. Additionally, the first condition assures that the partition will not be split by $\iota$ at any other parts as splitting does not apply to single element parts.

Now we consider merging. We will never merge a singleton part with a $\sigma$-set as this would change the overall $\sigma$-sets of the partition. Therefore we only have to be concerned with the case where a non-$\sigma$ part merges with the non-$\sigma$ part immediately after it. This does not occur because of the second criterion.

To finish the characterization, we must explain why every fixed point satisfies our criteria. Let $\pi$ be a fixed point of $\iota$. Since $\iota$ would split any non-$\sigma$ part with cardinality greater than one, we know that each part which is not the occurrence of a $\sigma$-set will have a single element in $\pi_i$. Additionally since $\iota$ will never apply a merge, this requires that each of these parts \emph{fail} the merge test, meaning they are in ascending order.
\end{proof}

\begin{theorem}
  \label{thm:cancellation-free antipode}
Let $M$ be a matroid with ground set $E$. Choose a total ordering on $E$, call it $<$. Then

\[
S(M)=\sum_{ \text{fixed points $\pi$ of } \iota_<} \ \sgn(\pi) \ \bigoplus_{i=1}^{j} U_{\delta_i^M-\sigma_i^M}^{| \delta_i^M-\sigma_i^M |} \oplus (M |_{\delta_i^M}) / (\delta_i^M - \sigma_i^M).
\]
\end{theorem}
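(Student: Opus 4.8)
The plan is to assemble this theorem directly from the structural results already established, treating it as the terminal bookkeeping step rather than as a place where new matroid-theoretic content enters. My starting point would be Proposition~\ref{prop:matant}, which rewrites Takeuchi's formula as a sum over ordered set partitions of $E$. Combining it with the definition $\sgn(\pi) = (-1)^k$ for $\pi = (\pi_1,\dots,\pi_k)$, this reads
\[
S(M) = \sum_{\pi \vDash E} \sgn(\pi)\, T(\pi).
\]
The whole content of the theorem is then to argue that the right-hand side collapses onto the fixed points of $\iota_<$, and to substitute the closed form for each surviving $T(\pi)$.

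The core step is a sign-reversing-involution cancellation, exactly as in the elementary template of Section~\ref{sec:signreversinginvs}, but applied to a sum weighted by the Hopf-algebra-valued terms $T(\pi)$ rather than to a bare sum of signs. I would partition the ordered set partitions of $E$ into the fixed-point set $F$ of $\iota_<$ together with the two-element orbits $\{\pi, \iota_<(\pi)\}$ guaranteed by the fact that $\iota_<$ is an involution. For each such orbit the combined contribution is
\[
\sgn(\pi)\, T(\pi) + \sgn(\iota_<(\pi))\, T(\iota_<(\pi)).
\]
Here the two earlier theorems do all the work: Theorem~\ref{thm:involution} gives $T(\iota_<(\pi)) = T(\pi)$, and Theorem~\ref{thm:signreversing} gives $\sgn(\iota_<(\pi)) = -\sgn(\pi)$, so each orbit contributes $\sgn(\pi)\,T(\pi) - \sgn(\pi)\,T(\pi) = 0$. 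Summing over all orbits leaves
\[
S(M) = \sum_{\pi \in F} \sgn(\pi)\, T(\pi).
\]
Finally I would invoke Proposition~\ref{prop:signless tak} to replace each surviving $T(\pi)$ by its explicit description in terms of the $\delta$-sets and $\sigma$-sets of $(M,\pi)$, namely
\[
T(\pi) = \bigoplus_{i=1}^{j} U_{\delta_i^M - \sigma_i^M}^{|\delta_i^M - \sigma_i^M|} \oplus (M|_{\delta_i^M})/(\delta_i^M - \sigma_i^M),
\]
which reproduces the stated formula verbatim.

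I expect no genuine obstacle at this stage: all of the difficulty has already been absorbed into the preceding results. The one point worth explicit comment is that the cancellation principle of Section~\ref{sec:signreversinginvs} is literally phrased for sums of $\pm 1$, whereas here we must cancel Hopf-algebra elements; the step is justified precisely because $\iota_<$ preserves the term $T$ (Theorem~\ref{thm:involution}) while reversing the sign (Theorem~\ref{thm:signreversing}), so paired terms are genuinely identical up to sign and not merely equal in sign-count. This $T$-preservation is the only substantive matroid-theoretic ingredient, and it is where the $\delta$-set machinery and the ``split/merge only when the $\delta$- and $\sigma$-sets are preserved'' clauses in the definition of $\iota_<$ earn their keep. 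For completeness I would also note that the explicit fixed-point characterization allows one to reindex the final sum concretely, although the formula as stated requires only the abstract fixed-point set $F$.
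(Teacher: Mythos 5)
Your argument is correct for the displayed identity and follows essentially the same route as the paper: the paper's proof likewise combines Theorem~\ref{thm:signreversing}, Theorem~\ref{thm:involution}, and Proposition~\ref{prop:signless tak} with the cancellation principle of Section~\ref{sec:signreversinginvs}; your orbit-pairing paragraph simply makes that citation explicit, and your observation that the principle must be applied to Hopf-algebra-valued terms rather than bare signs (justified precisely by $T(\iota_<(\pi))=T(\pi)$) is a worthwhile clarification of a point the paper leaves implicit.

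However, the paper's proof contains one further step that you omit: the verification that the fixed-point sum is itself \emph{cancellation-free}, which is what the theorem's name (and the paper's title) promises. A priori, two distinct fixed points $\pi$ and $\pi'$ could satisfy $T(\pi)=T(\pi')$ with $\sgn(\pi)=-\sgn(\pi')$, in which case the right-hand side would equal $S(M)$ yet still contain cancellations after grouping equal terms. The paper rules this out as follows: if $T(\pi)=T(\pi')$ for two fixed points, then $(M,\pi)$ and $(M,\pi')$ have the same $\sigma$-sets up to permutation, hence the same number of parts that are $\sigma$-sets, containing the same elements; since by the fixed-point characterization every element not lying in a $\sigma$-set occupies a singleton part, the two partitions have the same total number of parts, so $\sgn(\pi)=\sgn(\pi')$. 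Without this argument you have proved the equation but not the cancellation-freeness that is the theorem's reason for being, so you should append it to complete the proof.
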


\begin{proof}
This follows directly from Theorem \ref{thm:signreversing}, Theorem \ref{thm:involution}, and Proposition \ref{prop:signless tak}, and by using the principles outlined in Section~\ref{sec:signreversinginvs} about utilizing \emph{sign-reversing involutions} to produce cancellations in sums. To show this is cancellation free, let $T(\pi)=T(\pi ')$ for two fixed points $\pi$ and $\pi '$. This means that they must have the same $\sigma$-sets, though they could occur in different orders. In other words if $(\sigma_1^M, \sigma_2^M, \dots, \sigma_k^M)$ are the $\sigma$-sets for $(M,\pi)$, then the $\sigma$-sets for $(M,\pi')$ must be a permutation of that $k$-tuple. But then notice that in both tuples there would be exactly the same number of parts where a $\sigma$-set occurs, with exactly the same elements. Since all the elements not occuring in $\sigma$-sets occur as singleton parts, we have that $\sgn(\pi)=\sgn(\pi ')$. Therefore this formula is indeed cancellation free.
\end{proof}

%%%%%%%%%%%%%%%%%%%%%%%%%%%%%%%%%%%%%%%%%%%%%%%%%%%%%%%%%%%%%%%%%%%%%%%%
\section{Applications}
%%%%%%%%%%%%%%%%%%%%%%%%%%%%%%%%%%%%%%%%%%%%%%%%%%%%%%%%%%%%%%%%%%%%%%%%

In this section, we give some applications of Theorem~\ref{thm:cancellation-free antipode}.  In particular, in Section~\ref{sec:uniform}
we recover a formula of \cite{bm16} by interpreting Theorem~\ref{thm:cancellation-free antipode} for the class of uniform matroids.  Moreover, we refine this result by producing a formula that is also grouping-free.  Section~\ref{sec:hyperfield} shows that the techniques of this paper also yield an analogous cancellation-free formula for all matroids over hyperfields.

%---------------------------------------------------------------------------
\subsection{Uniform matroids}
\label{sec:uniform}
%---------------------------------------------------------------------------

A cancellation-free formula for the antipode of uniform matroids via
the involution method was first described in \cite{bm16}, and Theorem
\ref{thm:cancellation-free antipode} is a natural generalization of
the main result of that paper.

\begin{corollary}
  The antipode of a uniform matroid $U_{n}^r$ is given by
  \[
    S(U_n^r)=\sum_{I,J}(-1)^{n-|J|+1}U_I^{|I|}\oplus U_J^{r-|I|},
  \]
  where $I,J$ range over all pairs of subsets of the ground set $E$
  such that
  \begin{itemize}
  \item $I$ and $J$ are disjoint,
  \item $|I|<r$,
  \item $|I|+|J|\geq r$, and
  \item if $|I|+|J|=r$, then $J=\{x\}$ is a singleton and $\max(I)<x$.
  \end{itemize}
\end{corollary}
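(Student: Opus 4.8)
The plan is to specialize the fixed-point sum of Theorem~\ref{thm:cancellation-free antipode} to $M=U_n^r$ (equipped with any total order $<$ of $E$) and then to regroup its terms. The one structural fact driving everything is that, for a uniform matroid, a running union of parts first becomes dependent exactly when it first acquires $r+1$ elements. Hence if $\pi\vDash E$ and $\bigcup_{i\le\ell}\pi_i$ is the first dependent initial union, then $\delta_1^M=\bigcup_{i\le\ell}\pi_i$ has rank $r$, and the contraction $U_n^r/\delta_1^M$ has rank $0$: \emph{every} remaining element is a loop. Therefore, once the first $\sigma$-set has been passed, each subsequent part of $\pi$ already forms a circuit in the ambient rank-$0$ minor, so it is simultaneously its own $\delta$-set and its own $\sigma$-set.

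Using this together with the characterization of the fixed points of $\iota_<$, I would show that a fixed point $\pi$ is exactly the following data: an initial run of ascending singletons whose union is an independent set $I:=\delta_1^M-\sigma_1^M$ with $|I|\le r$; the first $\sigma$-set $J:=\sigma_1^M$, which is disjoint from $I$ and satisfies $|I|+|J|\ge r+1$; and then an \emph{arbitrary} ordered set partition $\rho$ of the leftover set $R:=E-(I\cup J)$, each block of which is one of the later (rank-$0$) $\sigma$-sets. Feeding this into Proposition~\ref{prop:signless tak}, and using that $U_n^r$ restricted to an independent set is free, that contracting the independent set $I$ inside $U_{I\cup J}^r$ yields $U_J^{r-|I|}$, and that a direct sum of rank-$0$ uniform matroids is again rank-$0$, I would obtain
\[
T(\pi)=U_I^{|I|}\oplus U_J^{r-|I|}\oplus U_R^{0},
\]
which depends only on the pair $(I,J)$ and not on the chosen partition $\rho$ of $R$. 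The loops $U_R^0$ on $R=E-(I\cup J)$ are the summand that the statement leaves implicit; they are forced by the grading.

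Next I would perform the regrouping. For a fixed pair $(I,J)$ the signless term above is constant, while $\sgn(\pi)=(-1)^{|I|+1+k(\rho)}$, where $k(\rho)$ is the number of blocks of the chosen partition $\rho\vDash R$. Summing over all such $\rho$ reduces to the elementary identity
\[
\sum_{\rho\vDash R}(-1)^{k(\rho)}=(-1)^{|R|},
\]
which I would prove by conditioning on the first block and invoking the binomial theorem (with the empty partition of $R=\emptyset$ contributing $1$). This collapses the inner sum and attaches the coefficient $(-1)^{|I|+1}(-1)^{|R|}=(-1)^{\,n-|J|+1}$ to each pair $(I,J)$ subject to $|I|\le r$ and $|I|+|J|\ge r+1$. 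It remains to match this index set to the one in the statement. The groups with $|I|<r$ (necessarily having $|I|+|J|\ge r+1$) are exactly the non-boundary terms and match verbatim. For the groups with $|I|=r$ the term is $U_I^r\oplus U_{E-I}^0$ independently of $J$; summing $(-1)^{n-|J|+1}$ over all nonempty $J\subseteq E-I$ gives $(-1)^n$, and writing $U_I^r=U_{I-\{x\}}^{r-1}\oplus U_{\{x\}}^{1}$ with $x=\max I$ (free $\oplus$ free is free) exhibits this as the single boundary term indexed by $(I-\{x\},\{x\})$, for which $|I-\{x\}|+|\{x\}|=r$, the second block is a singleton, and $\max(I-\{x\})<x$, exactly as required.

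I expect the main obstacle to be the fixed-point bookkeeping and the boundary reconciliation rather than the sign identity. The essential subtlety is that fixed points are \emph{not} in bijection with the terms of the final formula: all fixed points that differ only in how the rank-$0$ tail $R$ is subdivided share a single signless term, so one must genuinely evaluate the signed sum over ordered partitions of $R$ before a clean, term-indexed formula appears. Checking that the $|I|=r$ fixed points reassemble exactly into the singleton-$J$ boundary terms, with the ordering constraint $\max(I)<x$ emerging precisely from the choice $x=\max I$, is the step demanding the most care.
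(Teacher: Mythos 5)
Your proposal is correct, and at the top level it takes the same route as the paper: specialize Theorem~\ref{thm:cancellation-free antipode} to $U_n^r$, note that only $\delta_1^M$ is a non-trivial $\delta$-set, and encode a fixed point by $I=\delta_1^M-\sigma_1^M$ (the initial ascending singletons) and $J=\sigma_1^M$. The difference is that the paper's proof is only a sketch---it asserts that ``direct computation'' of the signs and a careful characterization of the fixed points ``results in the indexing set above''---whereas you supply the two steps that sketch actually requires, and both do real work. First, fixed points are \emph{not} in bijection with pairs $(I,J)$: every ordered set partition $\rho$ of the loop set $R=E-(I\cup J)$ yields a distinct fixed point with the same signless term but sign $(-1)^{|I|+1+k(\rho)}$ depending on the number of blocks of $\rho$, so one genuinely needs your identity $\sum_{\rho\vDash R}(-1)^{k(\rho)}=(-1)^{|R|}$ to collapse each class to the single coefficient $(-1)^{n-|J|+1}$; incidentally, this shows that fixed points sharing a signless term can carry opposite signs, so the fixed-point sum itself still has cancellations for uniform matroids and your collapse is not optional bookkeeping. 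Second, the fixed-point encoding produces the index set $|I|\le r$, $|I|+|J|\ge r+1$, which is \emph{not} the corollary's index set; the boundary terms with $|I|+|J|=r$, $J=\{x\}$, $\max(I)<x$ appear only after your regrouping of the $|I|=r$ classes, where summing $(-1)^{n-|J|+1}$ over nonempty $J\subseteq E-I$ gives $(-1)^n$ and $U_I^r$ is rewritten as $U_{I-\{\max I\}}^{r-1}\oplus U_{\{\max I\}}^{1}$. The paper's proof never mentions either step. Your reading that the loop summand $U_R^0$ is left implicit in the statement is also the right one (it is forced by gradedness). The only gaps, shared with the paper, are the degenerate cases: $n=r$ (no circuits, so the first dependent initial union never occurs; there the unique fixed point is the full ascending singleton partition, matching the single boundary term) and $r=0$ (where the stated formula is vacuous and must be excluded).
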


\begin{proof}
Consider the antipode formula in Theorem \ref{thm:cancellation-free antipode}. Since every circuit in $U_{n}^r$ has the same cardinality we can see that there is exactly one $\delta$-set which is not trivial, $\delta_1^M$. Therefore,
\[S(U_{n}^r) = \sum_{ \text{fixed points $\pi$ of } \iota_<} \ \sgn(\pi) \ U_{\delta_1^M-\sigma_1^M}^{| \delta_1^M-\sigma_1^M |} \oplus (M |_{\delta_1^M}) / (\delta_1^M - \sigma_1^M).
\]

Direct computation of the $\sgn(\pi)$ and the second term in the internal summand produces the desired formula. The indexing set in the summand can be found by carefully characterizing the fixed points of $\iota$. Since every circuit of $U_{n}^r$ is the same cardinality this can be done by considering the elements of $E$ which are in parts of $\pi$ prior to $\sigma_1^M$ and the elements which are in $\sigma_1^M$. We let $I$ be the set of elements that occur prior to $\sigma_1^M$ and $J=\sigma_1^M$. The result is the indexing set above.
\end{proof}

Basic counting of the fixed points described above yields the
following.
\begin{corollary}\label{cor:group-free-uniform}
  Let $U_n^r$ be the uniform matroid of rank $r$ on $n$ elements.  We
  have the following cancellation-free, grouping-free formula for the
  antipode:
  \[
    S(U_n^r)
    =\sum_{i=0}^{r-2}
    \sum_{j=r-i+1}^{n-i}
    (-1)^{n-j+1}\binom{n}{i}\binom{n-i}{j}(U_i^i\oplus U_j^{r-i})
    +(-1)^n\sum_{x=r}^n\binom{x-1}{r-1}U_r^r.
  \]
\end{corollary}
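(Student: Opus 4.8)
The plan is to start from the uniform specialization already recorded in the preceding corollary, namely
\[
S(U_n^r)=\sum_{I,J}(-1)^{n-|J|+1}\,U_I^{|I|}\oplus U_J^{r-|I|},
\]
and to convert this sum over \emph{pairs of subsets} of $E$ into a sum over \emph{isomorphism types} by collecting, for each type, all index pairs $(I,J)$ that produce it. The basic observation is that both the summand $U_I^{|I|}\oplus U_J^{r-|I|}$ and the associated sign $(-1)^{n-|J|+1}$ depend on $(I,J)$ only through the cardinalities $i=|I|$ and $j=|J|$. So the corollary follows once I count, for each admissible $(i,j)$, the number of fixed points (equivalently, index pairs) of those cardinalities, weight by the common sign $(-1)^{n-j+1}$, and simplify; the characterization of fixed points of $\iota_<$ established earlier is what lets me pass freely between fixed points and the combinatorial data $(I,J)$.

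First I would treat the \emph{generic} regime, where the pair satisfies the strict inequality $|I|+|J|>r$ and hence no extra ordering condition is imposed by $\iota_<$. Here every disjoint pair $(I,J)$ with $|I|=i$ and $|J|=j$ is admissible, and the count of such pairs is the number of ways to choose $I$ and then $J$ from the complement, namely $\binom{n}{i}\binom{n-i}{j}$. Recording the common summand $U_i^i\oplus U_j^{r-i}$ with sign $(-1)^{n-j+1}$, and reading off the ranges $j\ge r-i+1$ (strictness) and $j\le n-i$ (disjointness), produces the double sum in the statement.

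The remaining contribution is the \emph{extremal} regime, in which the $\sigma$-part is forced to be a single top element; this is exactly the case isolated by the boundary condition ``$J=\{x\}$ with $\max(I)<x$''. Two points must be verified. First, a matroid identification: in this regime the free part $U_I^{|I|}$ together with the coloop $U_{\{x\}}$ fuses into a free matroid, so every such term is the single type $U_r^r$ (a direct sum of free matroids is free). Second—and this is where the count ceases to be a plain product of binomials—the fixed-point condition now imposes a genuine order constraint: for a fixed top element $x$ the set $I$ must be chosen from the $x-1$ elements below $x$, giving $\binom{x-1}{r-1}$ choices. Summing over the possible top elements $x=r,\dots,n$ and simplifying the sign to $(-1)^{n}$ (using $j=1$) yields the second sum.

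Assembling the two regimes gives the stated formula, its grouping-free and cancellation-free character being inherited from the preceding results. I expect the extremal regime to be the main obstacle: unlike the generic regime, its multiplicity is the \emph{asymmetric} quantity $\sum_{x}\binom{x-1}{r-1}$ rather than a symmetric binomial product, precisely because $\iota_<$ pins down the relative order of the singleton parts and thereby breaks the symmetry that makes the generic count a single product. The care needed is to set up cleanly the bijection between the boundary fixed points and pairs $(x,I)$ with $I\subseteq\{y:y<x\}$ of size $r-1$, and to check that the two regimes are disjoint so that no fixed point is counted twice.
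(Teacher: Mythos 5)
Your overall strategy---stratify the index set of the preceding corollary by the cardinalities $(i,j)=(|I|,|J|)$, count each stratum by binomial coefficients, and treat the boundary stratum $|I|+|J|=r$ separately via the pairs $(x,I)$ with $I\subseteq\{y\mid y<x\}$---is exactly the ``basic counting of the fixed points'' that the paper itself invokes (its proof of this corollary is a single sentence), and your treatment of the extremal regime is correct: the sign is $(-1)^{n-1+1}=(-1)^n$, the count for a fixed top element $x$ is $\binom{x-1}{r-1}$, and $U_{r-1}^{r-1}\oplus U_1^1\cong U_r^r$.

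The gap is in the generic regime. The preceding corollary only requires $|I|<r$, so the strict-inequality stratum contains all pairs with $i=r-1$ and $j\geq 2$; a faithful count therefore produces
\[
\sum_{i=0}^{r-1}\sum_{j=r-i+1}^{n-i}(-1)^{n-j+1}\binom{n}{i}\binom{n-i}{j}\,\bigl(U_i^i\oplus U_j^{r-i}\bigr),
\]
with upper limit $i=r-1$, whereas the statement you are proving stops at $i=r-2$. You read off the constraints on $j$ but never examine the range of $i$, and the two do not agree. This cannot be repaired by regrouping isomorphism classes: for $j\geq 2$ the matroid $U_{r-1}^{r-1}\oplus U_j^1$ has exactly $r-1$ coloops and is not free, so it is isomorphic neither to $U_r^r$ nor to any $U_i^i\oplus U_j^{r-i}$ with $i\leq r-2$, and its coefficient $(-1)^{n-j+1}\binom{n}{r-1}\binom{n-r+1}{j}$ is nonzero. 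The case $r=1$ makes the failure stark: the double sum in the statement is then empty, so the statement asserts $S(U_n^1)=(-1)^n\,n\,U_1^1$, while the corollary you start from gives $S(U_n^1)=\sum_{j\geq 2}(-1)^{n-j+1}\binom{n}{j}U_j^1+(-1)^n\,n\,U_1^1$, which contains the term $-U_n^1$. So your derivation, carried out correctly, does not yield the stated formula; what it actually reveals is that the stated corollary is inconsistent with the corollary it is supposed to follow from (the entire $i=r-1$ stratum is missing), and a complete argument must either account for those terms or flag the inconsistency rather than assert a match. (A secondary issue, inherited from the preceding corollary rather than introduced by you: since the antipode preserves the grading, each term should also carry loops $U_{n-i-j}^0$ on the unused elements---as written, neither formula is homogeneous of degree $n$.)
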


%---------------------------------------------------------------------------
\subsection{Matroids over hyperfields}
\label{sec:hyperfield}
%---------------------------------------------------------------------------
In this section, we show that the method of split-merge can also be
employed to obtain a cancellation-free antipode formula for Hopf
algebras defined in \cite{ejs17} in the case of matroids over
hyperfields. To this end, we slightly change the definitions in the
previous sections. We refer the reader to \cite{ejs17} for details on
matroids over hyperfields and Hopf algebras constructed from them.

Let $E$ be a finite set, $H$ a hyperfield, and $M$ a (weak or strong) matroid over $H$ on $E$. Then, as in the matroid case, the antipode $S(M)$ of $M$ can be indexed by ordered partitions of $E$ as follows:
\[
S(M) = \sum_{k \ge 0} (-1)^k \sum_{(\pi_1,\ldots,\pi_k)\vDash E} M|_{\pi_1} \oplus (M/\pi_1)|_{\pi_2} \oplus \cdots \oplus (M/\bigcup_{i=1}^{k-2}\pi_i)|_{\pi_{k-1}} \oplus M/\bigcup_{i=1}^{k-1} \pi_i.
\]
The $\ell$-th part of an ordered partition $\pi=(\pi_1,\ldots,\pi_k)$ is defined as follows (cf.~\eqref{equation: l-part}):
\[
\ell(M,\pi) := \min\left\{t\ |\ \bigcup_{1 \le i \le t} \pi_i \text{ contains the support of a circuit of $M$}\right\}.
\]
With this, $\delta_i^M$ and $\sigma_i^M$ are defined in the same way as for ordinary matroids. We thus obtain the following.

\begin{corollary}\label{proposition: hyperfield signless}
The signless Takeuchi term $T(\pi)$ is completely determined by the $\delta$-sets of $(M,\pi)$.  Specifically,
	\[
	T(\pi) = \bigoplus_{i=1}^{j} M |_{\delta_i^M - \sigma_i^M} \oplus (M |_{\delta_i^M}) / (\delta_i^M - \sigma_i^M).
	\]
\end{corollary}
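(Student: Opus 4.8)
The plan is to run the proof of Proposition~\ref{prop:signless tak} essentially verbatim, the one difference being that the first summand of each block is now left as $M|_{\delta_i^M - \sigma_i^M}$ instead of being recognized as a uniform matroid. First I would record the facts about the operations over a hyperfield $H$ that the argument needs: restriction, contraction, and direct sum of matroids over $H$ (as defined in \cite{ejs17, baker2017matroids}) obey the same localization identities as ordinary matroids, namely $(M|_A)|_B = M|_B$ for $B \subseteq A$ and $(M/A)|_B = (M|_{A \cup B})/A$. Granting these, the computation of $T(\pi)$ is independent of the order in which the operations dictated by $\pi$ are applied, and for any $A \subseteq E$ the term splits as a direct sum $T_M(\pi) = T_{M|_A}(\pi') \oplus T_{M/A}(\pi'')$ over the induced sub-partitions (here $T_N$ denotes the signless Takeuchi term taken with base matroid $N$). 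This step should be routine given the references, and it holds uniformly for weak and strong matroids over $H$.

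Next I would peel off the first $\delta$-block. Grouping the first $\ell := \ell(M,\pi)$ parts, whose union is $\delta_1^M$, the splitting identity with $A = \delta_1^M$ gives $T_M(\pi) = T_{M|_{\delta_1^M}}(\pi_1,\dots,\pi_\ell) \oplus T_{M/\delta_1^M}(\pi/\delta_1^M)$. Inside the first factor I would peel off once more at $\delta_1^M - \sigma_1^M = \pi_1 \cup \dots \cup \pi_{\ell-1}$: since $\sigma_1^M = \pi_\ell$ is the last part, its contribution is exactly $(M|_{\delta_1^M})/(\delta_1^M - \sigma_1^M)$, while the earlier parts contribute $T_{M|_{\delta_1^M - \sigma_1^M}}(\pi_1,\dots,\pi_{\ell-1})$. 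Applying the same peeling to the leftover factor $T_{M/\delta_1^M}(\pi/\delta_1^M)$ and using the iterative definitions of the $\delta$- and $\sigma$-sets reproduces the blocks indexed by $i \ge 2$. Thus the entire statement reduces to the single claim that $T_N(\rho) = N$ whenever $N := M|_{\delta_1^M - \sigma_1^M}$ and $\rho := (\pi_1,\dots,\pi_{\ell-1})$ is an ordered partition covering the ground set of $N$.

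The main obstacle is exactly this reduced claim, and it is the only place where hyperfield-specific input is genuinely required. Its proof rests on the defining property of $\ell(M,\pi)$: the set $\delta_1^M - \sigma_1^M$ contains the support of no circuit of $M$, so $N$ is a circuit-free matroid over $H$. In the ordinary setting this immediately forces $N$ to be the free matroid $U_N^{|N|}$, whose restrictions and contractions are again free and whose pieces reassemble under $\oplus$, which is precisely how Proposition~\ref{prop:signless tak} produces its uniform summand. In the hyperfield setting I would therefore verify the analogous structural fact---that a circuit-free matroid over $H$ is the free matroid over $H$, that its restrictions and contractions remain circuit-free, and that the direct sum of the resulting pieces recovers $N$. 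I expect this to follow directly from the circuit axioms for matroids over hyperfields in \cite{baker2017matroids} together with the description of the Hopf operations in \cite{ejs17}, since a matroid over $H$ with no circuits has a determined (up to rescaling) Grassmann--Pl\"ucker function. Once this is established, an induction on the number of $\delta$-blocks closes the argument and yields the stated formula, with $M|_{\delta_i^M - \sigma_i^M}$ appearing in place of the uniform matroid of the ordinary case.
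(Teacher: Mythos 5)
Your proposal is correct and takes essentially the same route as the paper: the paper's proof simply invokes the rearrangement result \cite[Corollary 3.10]{ejs17} and declares the remainder identical to the proof of Proposition~\ref{prop:signless tak}, and your argument is that same strategy carried out explicitly. Your identification of the one genuinely hyperfield-specific input---that a circuit-free matroid over $H$ is the free matroid over $H$ (unique up to rescaling), so its restrictions, contractions, and direct sums reassemble to recover it---is precisely what the paper's phrase ``what remains is identical'' implicitly relies on.
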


The proof is essentially the same as in Proposition \ref{prop:signless
  tak}. In fact, one can use \cite[Corollary 3.10]{ejs17} to rearrange
a partition so that it only depends on $\delta$-sets. What remains is
identical to the proof of Proposition \ref{prop:signless tak}.

Let $M$ be a matroid over a hyperfield $H$. Let $E$ be the ground set of $M$. We fix a total order $<$ on $E$ and define the involution $\iota_<$ as in Section~\ref{sec:inv}. We note that $\iota_<$ only depends on the order $<$, delta sets $\delta_i^M$, and sigma sets $\sigma_i^M$. Hence the same description as in Section~\ref{sec:inv} can be used for the hyperfield case. In particular, since $T(\pi)$ in Corollary~\ref{proposition: hyperfield signless} only depends on delta and sigma sets, one has $T(\pi)=T(\iota_<(\pi))$.

\begin{corollary}\label{coro:hyperfield theorem}
  Let $M$ be a matroid over a hyperfield $H$ with ground set $E$. Choose a total ordering on $E$, call it $<$. Then
  \[
    S(M)=\sum_{ \text{fixed points $\pi$ of } \iota_<} \ \sgn(\pi) \
    \bigoplus_{i=1}^{j} M |_{\delta_i^M - \sigma_i^M} \oplus (M
    |_{\delta_i^M}) / (\delta_i^M - \sigma_i^M).
  \]
\end{corollary}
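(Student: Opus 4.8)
The plan is to mirror the argument that established Theorem~\ref{thm:cancellation-free antipode} in the ordinary matroid setting, observing that every ingredient of that proof survives verbatim once the definitions have been reinterpreted for matroids over the hyperfield $H$. First I would note that the Takeuchi expansion for $S(M)$ is already stated for the hyperfield case at the start of Section~\ref{sec:hyperfield}, so the antipode is a signed sum of signless Takeuchi terms $T(\pi)$ indexed by ordered set partitions $\pi \vDash E$, with $\sgn(\pi) = (-1)^k$ exactly as before. The strategy is therefore to apply the sign-reversing involution machinery of Section~\ref{sec:signreversinginvs}: I would invoke Corollary~\ref{proposition: hyperfield signless} to rewrite each $T(\pi)$ purely in terms of the $\delta$-sets and $\sigma$-sets of $(M,\pi)$, and then exhibit that $\iota_<$ is a sign-reversing involution preserving $T$.

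The key steps, in order, are as follows. I would first confirm that $\iota_<$ is well-defined on ordered set partitions of $E$ in the hyperfield setting; the text already remarks that the split-merge process references only the total order $<$ together with the $\delta_i^M$ and $\sigma_i^M$, all of which are defined for matroids over $H$ via the modified $\ell$-part (using the support of a circuit). I would then reuse the two-case argument of Theorem in Section~\ref{sec:inv} to see that $\iota_<^2 = \id$, since that argument is purely combinatorial and depends only on how splitting and merging interact with the ordering and with the invariance of $\delta$- and $\sigma$-sets. Next I would confirm that $\iota_<$ is sign-reversing on non-fixed points, which follows immediately because splitting increases and merging decreases the number of parts by one, altering the parity of $k$. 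Finally, invariance $T(\pi) = T(\iota_<(\pi))$ follows from Corollary~\ref{proposition: hyperfield signless} together with the fact, emphasized in the paragraph preceding the corollary, that $\iota_<$ preserves the $\delta$- and $\sigma$-sets by construction. Summing over fixed points then yields the displayed formula.

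The one point that genuinely requires care, and which I expect to be the main obstacle, is verifying that the signless-term computation of Corollary~\ref{proposition: hyperfield signless} is legitimate for matroids over $H$---specifically, that $T(\pi)$ really is order-independent and factors through the $\delta$-sets. In the ordinary matroid case this relied on the fact that the restriction of $M$ to $\delta_1^M - \sigma_1^M$ contains no dependent set and hence is a uniform matroid, so that further operations stabilize. Over a hyperfield one cannot simply assert that the corresponding restriction is ``free''; instead I would lean on \cite[Corollary 3.10]{ejs17} (cited in the text) to justify rearranging the restrictions and contractions so that the computation depends only on the $\delta$-sets, and then transcribe the remainder of the proof of Proposition~\ref{prop:signless tak}. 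Once this structural fact is in hand, the entire fixed-point characterization and the cancellation-free conclusion carry over with no essential change, and the final formula records exactly the contribution $M|_{\delta_i^M - \sigma_i^M} \oplus (M|_{\delta_i^M})/(\delta_i^M - \sigma_i^M)$ over each fixed point $\pi$ of $\iota_<$.
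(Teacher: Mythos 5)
Your proposal is correct and takes essentially the same route as the paper: both arguments observe that the involution $\iota_<$ and the signless term $T(\pi)$ depend only on the ordering $<$ and the $\delta$- and $\sigma$-sets (now defined via circuit supports), with \cite[Corollary 3.10]{ejs17} supplying the rearrangement that replaces the uniform-matroid step in Proposition~\ref{prop:signless tak}, after which the involution, sign-reversal, and fixed-point characterization carry over unchanged. No gaps to report.
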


Finally, we note that the characterization of fixed points of $\iota$ only depends on $\delta$- and $\sigma$-sets.  Hence one has the same characterization for the case of hyperfields. 

%%%%%%%%%%%%%%%%%%%%%%%%%%%%%%%%%%%%%%%%%%%%%%%%%%%%%%%%%%%%%%%%%%%%%%%%
\section{Small example of computation}
%%%%%%%%%%%%%%%%%%%%%%%%%%%%%%%%%%%%%%%%%%%%%%%%%%%%%%%%%%%%%%%%%%%%%%%%

We compute the antipode using the method described above for the
matroid $M$ on ground set $E = \{1,2,3,4\}$ given by circuits $\{123,124,34\}$.  Note
that $M=M[\Gamma]$ is the cycle matroid of the following graph:
\[
  \begin{tikzpicture}[scale=.5]
    \tikzstyle{every node}=[circle,fill=black,inner sep=1pt]
    \draw
    (90:2)  node (a){}
    (210:2) node (b){}
    (330:2) node (c){}
    ;
    \draw
    (a) edge node[fill=none,label={150:$1$}] {} (b)
    edge node[fill=none,label={30:$2$}] {} (c)
    (b) edge[out=-30,in=210] node[fill=none,label={-90:$4$}] {} (c)
    (b) edge[out=30,in=150]  node[fill=none,label={90:$3$}]  {} (c)
    ;
    \draw (180:3) node[fill=none,rectangle] {$\Gamma=$};
  \end{tikzpicture}
\]

Recall that the signless Takeuchi term associated to an ordered set partition
$\pi=(\pi_1,\pi_2,\ldots,\pi_r)$ of $E$ is given by:
\[
  T(\pi)=M|_{\pi_1}
  \oplus (M/\pi_1)|_{\pi_2}
  \oplus (M/(\pi_1\cup\pi_2))|_{\pi_3}
  \oplus \cdots
  \oplus (M/(\pi_1\cup\pi_2\cup\cdots\cup\pi_{r-1}))|_{\pi_r}.
\]

When one na\"ively computes the Takeuchi formula on this matroid $M$,
one must compute the Takeuchi terms of the $75$ ordered partitions of
$\{1,2,3,4\}$, group these by equality (i.e.\ remembering the labels
associated to terms), and finally apply cancellations.  After doing so
for $M$, we have the antipode $S(M)$ given below:

\begin{align*}
  S(M)
  & =-T(1234)
  \\
  & +T(24|13)+T(23|14)+T(12|34)+T(14|23)+T(13|24)
  \\
  & -T(4|123)-T(3|124)-T(2|13|4)-T(1|23|4)-T(1|24|3)-T(2|14|3)
  \\
  & -T(2|34|1)-T(1|34|2)
  \\
  & +T(123|4)+T(124|3)
  \\
  & +T(34|12)
  \\
  & +T(2|134)+T(1|234).
\end{align*}
This yields the following grouping-free formula after identifying
isomorphism classes (the chosen labels are lexicographically minimal
in their isomorphism classes):
\[
  S(M)
  =
  -T(1234)
  +5\cdot T(12|34)
  -8\cdot T(1|23|4)
  +2\cdot T(123|4)
  +T(34|12)
  +2\cdot T(1|234).
\]

We now apply the involution method to compute $S(M)$.  First we sort
the ordered partitions of $\{1,2,3,4\}$ by their $\delta$-sets and
indicate which ``merge'' below:
\begin{align*}
  \delta^M_1 &= 34:
  &&
     \begin{matrix}
       4|3|2|1 & \merge & 34|2|1 \\
       4|3|1|2 & \merge & 34|1|2 \\
       4|3|12 & \merge & 34|12
     \end{matrix}
  \\\hline
  \delta^M_1 &= 123:
  &&
     \begin{matrix}
       3|2|1|4 & \merge & 23|1|4 \\
       2|1|3|4 & \merge & 12|3|4 \\
       3|1|2|4 & \merge & 13|2|4
     \end{matrix}
  \\\hline
  \delta^M_1 &= 124:
  &&
     \begin{matrix}
       4|2|1|3 & \merge & 24|1|3 \\
       2|1|4|3 & \merge & 12|4|3 \\
       4|1|2|3 & \merge & 14|2|3
     \end{matrix}
  \\\hline
  \delta^M_1 & =134:
  &&
     \begin{matrix}
       3|1|4|2 & \merge & 13|4|2 \\
       4|1|3|2 & \merge & 14|3|2
     \end{matrix}
  \\\hline
  \delta^M_1 & =234:
  &&
     \begin{matrix}
       3|2|4|1 & \merge & 23|4|1 \\
       4|2|3|1 & \merge & 24|3|1
     \end{matrix}
  \\\hline
  \delta^M_1 & =1234:
  &&
     \begin{matrix}
       4|2|13 & \merge & 24|13 \\
       4|1|23 & \merge & 14|23 \\
       3|2|14 & \merge & 23|14 \\
       3|1|24 & \merge & 13|24 \\
       2|1|34 & \merge & 12|34
     \end{matrix}
\end{align*}
Notice that the split-merge operation (indicated above only as merges)
has associated $18$ pairs of these ordered partitions with one
another.  This accounts for $36$ of the ordered partitions.  Thus the
fixed points of $\iota_1$ are not yet in bijection with the
cancellation-free terms of our formula.  We proceed with the second
iteration of our rule, noting that $\delta_2^M=1234$ for all ordered
partitions $\pi$ of $1234$.  Our first example below demonstrates the
importance of sorting into $\delta$-piles:
\begin{align*}
  \delta^M_1 &= 34:
  &&
     \begin{matrix}
       3|4|2|1 & \merge & 3|4|12
     \end{matrix}
  \\\hline
  \delta^M_1 &= 123:
  &&
     \begin{matrix}
       2|3|1|4 & \merge & 2|13|4 \\
       1|3|2|4 & \merge & 1|23|4 \\
       3|12|4 & \merge & 123|4
     \end{matrix}
  \\\hline
  \delta^M_1 &= 124:
  &&
     \begin{matrix}
       2|4|1|3 & \merge & 2|14|3 \\
       1|4|2|3 & \merge & 1|24|3
     \end{matrix}
  \\\hline
  \delta^M_1 &= 134:
  &&
     \begin{matrix}
       1|4|3|2 & \merge & 1|34|2 \\
       4|13|2 & \merge & 134|2
     \end{matrix}
  \\\hline
  \delta^M_1 &= 234:
  &&
     \begin{matrix}
       2|4|3|1 & \merge & 2|34|1 \\
       4|23|1 & \merge & 234|1 
     \end{matrix}
\end{align*}

Note that the involution $\iota_2$ cannot be extended (all
$\delta^M_2=1234$).  Moreover, the fixed points of $\iota_2$ yield the
cancellation-free antipode formula described above.

\begin{remark}
  Applying the above argument to the oriented matroid with
  signed circuits
  \[
    \cC=\{\pm(+,+,+,0),\pm(+,+,0,-),\pm(0,0,+,+)\}
  \]
  yields much the same computation.  In particular, one obtains the
  same cancellation-free formula for $S(M)$; it is the grouping-free
  formula that changes.  A difference here is that the terms
  $T(123|4)$ and $T(124|3)$ no longer represent the same isomorphism
  class of oriented matroids.  Thus the matroid-minor Hopf algebra
  of oriented matroids has a refined antipode.
\end{remark}

\bibliography{refs}{}
\bibliographystyle{alpha}

\end{document}